\theoremstyle{plain}
\newtheorem{maintheorem}{Theorem}
\newtheorem{maincorollary}[maintheorem]{Corollary}
\newtheorem{theorem}{Theorem}[section]
\newtheorem{proposition}[theorem]{Proposition}
\theoremstyle{definition}
\newtheorem{definition}{Definition}[section]
\newtheorem*{condition}{Condition}
\theoremstyle{remark}
\newtheorem{remark}{Remark}[section]
\def\namedlabel#1#2{\begingroup
   #2%
 \def\@currentlabel{#2}%
   \phantomsection\label{#1}\endgroup
}
\def\nn{\ensuremath{\mathscr N}}
\def\R{\ensuremath{\mathbb R}}
\def\N{\ensuremath{\mathbb N}}
\def\I{\ensuremath{{\bf 1}}}
\def\e{{\ensuremath{\rm e}}}
\def\G{\ensuremath{\mathcal G}}
\def\RR{\ensuremath{\mathcal R}}
\def\B{\ensuremath{\mathcal B}}
\def\M{\ensuremath{\mathcal M}}
\def\l{{\rm Leb}}
\def\P{\ensuremath{\mathcal P}}
\def\p{\ensuremath{\mathbb P}}
\def\F{\ensuremath{\mathcal F}}
\def\t{\ensuremath{t}}
\def\X{\mathcal{X}}
\def\ie{{\em i.e.}, }
\def\cv{\ensuremath{\text {Cor}}}
\def\e{\rm e}
\def\o{\ensuremath{\underline{\omega}}}
\newcommand{\dif}{\mathrm{d}}
\newcommand{\un}{\underline}
\newcommand{\ov}{\overline}
\def\dist{\ensuremath{\text{dist}}}
\def\supp{\ensuremath{\text{supp}}}
\def\sm{\setminus}
\def\eps{\varepsilon}
\def\vfi{\varphi}
\def\wt{\widetilde}
\newcommand{\qand}{\quad\text{and}\quad}
\def\sm{\ensuremath{\mu_\varepsilon}}
\mathchardef\ordinarycolon\mathcode`\:
\numberwithin{equation}{section}
\begin{document}

\title[Laws of rare events for transitive random maps]{Decay of
  correlations and laws of rare events for transitive random maps}

\author[V. Ara\'{u}jo]{V\'itor Ara\'{u}jo}

\address{Vitor Ara\'ujo,
 Departamento de Matem\'atica, Universidade Federal da Bahia\\
Av. Ademar de Barros s/n, 40170-110 Salvador, Brazil.}
\email{vitor.d.araujo@ufba.br,
  www.sd.mat.ufba.br/$\sim$vitor.d.araujo}
\email{vitor.araujo.im.ufba@gmail.com}

\author[H. Ayta\c{c}]{Hale Ayta\c{c}}

\address{Hale Ayta\c{c},
 Departamento de Matem\'atica, Universidade Federal da Bahia\\
Av. Ademar de Barros s/n, 40170-110 Salvador, Brazil.}
\email{aytachale@gmail.com}

\date{\today}

\keywords{Random maps, Extreme Value Laws,
  Hitting/Return Times Statistics, Rare Event Point
  Processes, transitivity, Harris chains, Doeblin's condition.  } 
\subjclass[2010]{37A50, 60G70, 37B20, 60G10, 37A25, 37H99}


\begin{abstract}
  We show that a uniformly continuous random perturbation of
  a transitive map defines an aperiodic Harris chain which
  also satisfies Doeblin's condition. As a result, we get
  exponential decay of correlations for suitable random
  perturbations of such systems.  We also prove that, for
  transitive maps, the limiting distribution for Extreme
  Value Laws (EVLs) and Hitting/Return Time Statistics
  (HTS/RTS) is standard exponential. Moreover, we show that
  the Rare Event Point Process (REPP) converges in
  distribution to a standard Poisson process.
  \end{abstract}

\maketitle
\tableofcontents

\section{Introduction}

Deterministic discrete dynamical systems are often used to
model physical phenomena. However, it is more realistic to
consider random perturbations of such systems to take into
consideration the observational errors. The behaviour of such
random systems has been studied thoroughly in the last
decades. We mention, for example, \cite{Ki86,KiLi06} for
excellent expositions on the subject.

Laws of rare events for chaotic (deterministic) dynamical
systems have also been extensively studied in the last
years. By \textit{rare events} we mean that the probability
of the event is small. In the literature, these notions were
first described as Hitting/Return Times Statistics
(HTS/RTS). In this setting, rare events correspond to
entrances in small regions of the phase space and the goal
is to prove distributional limiting laws for the normalised
waiting times before hitting/returning to these
asymptotically small sets.  For a nice review on the
subject we mention \cite{Sau09}. More recently, rare events
have also been studied through Extreme Value Laws (EVLs). In
this setting, rare events correspond to exceedances of a
high level and one looks for the distributional limit of the
partial maxima of stochastic processes arising from such
chaotic systems simply by evaluating an observable function
along the orbits of the system. We refer the reader to
\cite{Freitas13} for a review on this subject. It turns out that
these are just two views on the same phenomena: there is a
link between these two approaches. This link was already
perceivable in the pioneering work of Collet, \cite{collet2001},
however it has been formally proved in \cite{FFT10,FFT11},
where Freitas et al.\ showed that under general conditions
on the observable functions, the existence of HTS/RTS is
equivalent to the existence of EVLs. These observable
functions achieve a maximum (possibly $\infty$) at some
chosen point $\zeta$ in the phase space so that the rare
event of  an exceedance of a high level occurring corresponds
to an entrance in a small ball around $\zeta$. Moreover, the
study of rare events may be enhanced if we enrich the
process by considering multiple exceedances (or hits/returns
to target sets) that are recorded by Rare Events Point
Processes (REPP), which count the number of exceedances (or
hits/returns) in a certain time frame. Then the aim is to
get limits in distribution for such REPP when time is
adequately normalised.

Very recently, the connection between EVLs and HTS/RTS for
deterministic dynamics was extended to the random case
through additive random perturbations in Aytaç, Freitas and
Vaienti \cite{AFV15}.  There additive random perturbation of
expanding and piecewise expanding maps (with finite
branches) was studied and a standard exponential law was
obtained. Since then several advances have been obtained in
this direction (we will mention some of them below). We
remark that it was in the random setting that the
fundamental theory of extreme value was developed and there
are two approaches for studying the recurrence properties of
the underlying system. In \cite{MR11}, Marie and Rousseau
defined, for the first time, annealed and quenched return
times for systems generated by the composition of random
maps.  On the one hand, in the annealed approach, the
realisation is fixed and then integrated over all possible
realisations to get the law. In this case the product
measure for the skew-product system is studied. On the other
hand, to study the quenched approach we take a random
realisation, consider sample-stationary measures and get
limit laws for almost every realisation.

In \cite{FFLTV13}, Faranda et al.\ studied the additive
random perturbation of rational and irrational rotations and
proved, using the annealed approach, the existence of
extreme value laws for perturbed dynamics, regardless of the
intensity of the noise whereas there is no limiting law in
deterministic case.

In \cite{RSV14}, Rousseau et al.\ got an exponential law for
random subshifts of finite type. They showed that for
invariant measures with super-polynomial decay of
correlations hitting times to dynamically defined cylinders
satisfy an exponential distribution. They also got similar
results for random expanding maps.  Their results were
quenched exponential law for hitting times.

In \cite{Rous14}, Rousseau studied hitting and return time
statistics for observations of dynamical systems and got an
annealed exponential law for super-polynomially mixing
random dynamical systems.  This theory was applied to random
expanding maps, random circle maps expanding on average and
randomly perturbed dynamical systems.

Again in \cite{RoTo15}, Rousseau and Todd proved the existence
of quenched laws of hitting time statistics for random
subshifts of finite type. They showed that it was still
possible to get a dichotomy of standard vs non-standard
exponential laws for non-periodic and for periodic points
respectively, even in the random setting.


One of the main achievements of this paper is the
generalisations of the results in \cite{AFV15}. As was
pointed out in \cite{AFV15}, decay of correlations against
all $L^1$ observables was one of the main ingredients in the
theory. Hence, the results there were restricted to systems with
summable decay of correlations against all $L^1$ observables
with some additional conditions on the map. Here, we show
that random perturbations of any transitive dynamical system
(that is, admitting a forward dense orbit) defines an
aperiodic Harris chain which also satisfies Doeblin's
condition (see Proposition~\ref{prop:ran-per-aper-Harris})
which gives rise to a uniformly ergodic Markov chain.  It is
well-known that random perturbations are a special case of
Markov Chains with suitable transition
probabilities. Recently Jost et al.\ in \cite{JKR15} showed
that Markov Chains with regular enough transition densities
can be represented by continuous random maps or random
diffeomorphisms.  Then, using the known results for such
chains, we conclude that every transitive dynamical system
under uniformly continuous random perturbation has
exponential decay of correlations against all $L^1$
observables (see Theorem~\ref{mthm:trans->DC}). With this
approach, we get laws of rare events for a larger set of
dynamics, namely transitive systems, under more general
random perturbations.

Our work shows that EVL and HTS/RTS for a large class of
randomly perturbed dynamics is an application of the theory
of Stochastic Processes/Markov Chains needing very little
deterministic dynamical assumptions on the underlying
unperturbed system: we only need transitiveness and very
general random perturbations.


\section{Definitions and statement of Results}
\label{sec:statement-results}

Let $(\M,\B,\upsilon,f)$ be a discrete time deterministic
dynamical system, where $\M$ is a compact connected
finite dimensional Riemannian manifold;
$\dist(\cdot,\cdot)$ denotes the induced Riemannian distance
on $\M$ and $\l$ a normalised volume form on the
$\sigma$-algebra $\B$ of Borel sets of $\M$ that we call
Lebesgue measure; $f:\M\to\M$ is a measurable map, and $\upsilon$
is an $f$-invariant probability measure.

Consider the time series $X_0,X_1,X_2,\dots$ arising from
such a system simply by evaluating a given random variable
(r.v.) $\varphi:\M\to\R\cup\{+\infty\}$ along the orbits of
the system:
\begin{equation}
\label{eq:def-stat-stoch-proc-DS} X_n=\varphi\circ f^n,\quad \mbox{for
each } n\in {\mathbb N}.
\end{equation}

Clearly, $X_0,X_1,\dots$ defined in this way is not an
independent sequence.  However, invariance of $\upsilon$
guarantees that the stochastic process is stationary.

\subsection{Random perturbations. Representation of Markov chains}
\label{sec:random-perturbations}

We now consider a random setting constructed from the
deterministic system via perturbing the original map.  Let
$F:\M\times X\to\M$ be a parameterized family of measurable
maps
$f_\omega:\M\to\M, f_\omega(x):=F(x,\omega), \omega\in X,
x\in\M$,
where $(X,d)$ is a compact metric space.  We denote the ball
of radius $\varepsilon>0$ around $x\in \M$ by
$B_\eps(x):= \{y\in\M: \dist(x,y)<\eps\}$ and around
$\omega\in X$ by
$V_\eps(\omega):=\{\eta\in X: d(\eta,\omega)<\eps\}$.  For a
fixed $\omega^*\in X$ which we denote by $0$ and some
$\eps_0>0$, let $\theta=\theta_{\eps_0}$ be a Borel probability
measure so that $\supp(\theta)\supset V_{\eps_0}(0)$.

 We define a random perturbation of $f:\M\to\M$ by the pair
 $(f_\omega,\theta)$ which we assume satisfies
 \begin{equation}\label{eq:random-perturbation}
   f_0=f 
   \qand 
   f^x\big(\supp(\theta)\big)\supset
   B_{\rho_0}(fx), \quad\l-\text{a.e.\  }  x\in\M
 \end{equation}
 for a constant $\rho_0>0$, where we write
 $f^x(\omega):=f_{\omega}(x)$; and also
\begin{equation}
 \label{eq:noise-distribution}
(f^x_*) \theta=q_x\l
 \quad\text{with}\quad
\un{q}
\leq
q_x
\le
\ov{q}, \quad \l-\text{a.e.\ on } \supp(f^x_*\theta)
\end{equation}
for some constants $\ov{q}>\un{q}>0$ and $\l$-a.e.\ $x$.

 Consider a sequence of i.i.d.\ random variables (r.v.)~
 $W_1, W_2,\ldots$ taking values on $V_\eps(0)$, where
 $\omega^*=0$, with common distribution given by $\theta$.
 Let $\Omega=V_\eps(0)^\N$ denote the space of realisations
 of such processes and $\theta^\N$ the product measure defined
 on its Borel subsets. Given a point $x\in\M$ and the
 realisation of the stochastic process
 $\o=(\omega_1,\omega_2,\ldots)\in\Omega$, we define the
 random orbit of $x$ as $x, f_{\o}(x), f^2_{\o}(x),\ldots$
 where the evolution of $x$, up to time $n\in\N$, is
 obtained by the concatenation of the respective randomly
 perturbed maps
\begin{align*}
f_{\o}^n(x):=
f_{\omega_n}\circ f_{\omega_{n-1}}\circ\cdots\circ f_{\omega_1}(x),
\end{align*}
with $f_{\o}^0$ being the identity map on $\M$.

In our setting, the random perturbations we consider satisfy
the conditions expressed in the relations
\eqref{eq:random-perturbation} and
(\ref{eq:noise-distribution}), which can be said to be
\emph{uniformly continuous random perturbations} requiring
the small noise to uniformly cover a ball of positive radius
around the unperturbed transformation. Weaker assumptions
might be sufficient to obtain the same results, but we did
not search for the most general possible conditions.

\subsubsection{Existence of uniformly continuous random
  perturbations}
\label{sec:existence-uniformly-}

Families of random maps satisfying
\eqref{eq:random-perturbation} and
(\ref{eq:noise-distribution}) can be constructed from any
$C^r$ map on a compact finite $n$-dimensional manifold, as
showed in \cite[Example 2]{Ar00}, which we present below for
completeness. Here we can have $r\ge0$ comprising (H\"older)
continous or smooth maps; measurable maps are also allowed.

We start by taking a finite number of coordinate charts
$\{ \psi_i: B(0,3) \mapsto\M\}_{i=1}^l$ such that
$\{ \psi_i( B(0,3)) \}_{i=1}^l$ is an open cover of $\M$ and
$\{ \psi_i( B(0,1) ) \}_{i=1}^l$ also (this is a standard
construction, cf.~\cite[Sec. 1.2]{PM82}), where $B(0,a)$
denotes the ball of radius $a>0$ in $\R^n$.  In each of
those charts we define $n$ orthonormal vector fields
$\widetilde{X}_{i1},\ldots,\widetilde{X}_{in}: B(0,3)
\mapsto T_{\psi_i(B(0,3))}\M$
and extend them to the whole of $\M$ with the help of bump
functions.  This may be done in such a way that the
extensions $X_{ij}$ are null outside $\psi_i(B(0,2))$ and
coincide with $\widetilde{X}_{ij}$ in
$\psi_i(\overline{B(0,1)})$, $i=1,\ldots,l$; $j=1,\ldots,n$.
We then see that, since $\{\psi_i(B(0,1))\}_{i=1}^l$ was an
open cover of $\M$, \emph{at every $x\in M$ there is some
  $1\le i\le l$ such that $X_{i1}(x),\ldots,X_{in}(x)$ is an
  orthonormal basis for $T_x M$}.

We define the following parameterized family
$$
F:(\R^n)^l \mapsto C^r(\M,\M), \qquad
F
\left( (u_{ij})_{i=1,\ldots,l \atop j=1,\ldots,n}
\right) (x)
= \Phi\left(
f(x), \sum_{i=1}^l\sum_{j=1}^n u_{ij}\cdot X_{ij} ,1
\right)
$$
where $\Phi:T\M\times\R\mapsto\M$ is the geodesic flow
associated to the given Riemannian metric.

We now take a small $\eps_0>0$ and consider the finite
dimensional parameterized family of maps
$F_|:V(0,\eps_0) \mapsto C^r(\M,\M)$, where $V(0,\eps_0)$ is
the $\eps_0$-ball around the origin in $\R^{n\cdot l}$.

Then by construction of $F$, every family
$\F_{a,\eps}=\{ F_t: \| t-a \| <\eps \}$ satisfies
conditions \eqref{eq:random-perturbation} and
\eqref{eq:noise-distribution} for some $\rho_0,\un{q}>0$,
where $\epsilon>0$ is so that
$\omega^*:=a\in\overline{V(a,\eps)}\subset V(0,\eps_0)$ and
we set $\theta=\frac{\l\mid V(a,\eps)}{\l(V (a,\eps))}$.

On paralellizable manifolds the implementation is even
simpler since we can perform the previous construction with
$l=1$ and obtain so called \emph{additive random
  perturbations}, as follows.

\subsubsection{Additive random perturbations on
  parallelizable manifolds}
\label{sec:additive-random-pert}

If $\M$ is parallelizable, then $T\M$ is diffeomorphic to
the trivial bundle $\M \times \RR^n$ and we can find $n$
globally orthonormal (hence nonvanishing) smooth vector
fields $X_1,\dots, X_n$ on $\M$.  We construct the following
family of differentiable maps
$$
F:\R^n \mapsto C^r(\M,\M), \qquad
F(u_1,\dots,u_n)(x)
= \Phi\left(
f(x), \sum_{j=1}^n u_j\cdot X_{j} ,1 \right)
$$
where $\Phi:T\M\times\R\mapsto\M$ is as above. Now for all
small enough $\eps_0>0$ considering
$F_|:V(0,\eps_0) \mapsto C^r(\M,\M)$ where $V(0,\eps_0)$ is
the $\epsilon_0$-ball around the origin in $\R^{n}$, the
family $\F_{a,\eps}=\{ F_t: \| t-a \| <\eps \}$ satisfies
conditions \eqref{eq:random-perturbation} and
\eqref{eq:noise-distribution} for some $\rho_0,\un{q}>0$.

\subsubsection{Representantion of Markov Chains by random
  maps}
\label{sec:repres-markov-chains}

In our setting the random perturbation is a special case of
a Markov Chain with transition probabilities given by
\begin{equation}
\label{eq:transition-prob}
p_x(A)=p(A\mid x)
=\theta\{\omega\in V_\eps(0): f_\omega(x)\in
A\}=[(f^x)_*\theta](A), \quad A\in\B
\end{equation} 
and $\mu$ is a stationary measure for the Markov Chain with
the family $(q_x)_{x\in\M}$ of transition densities
\cite{ohno1983}.

Recently Jost, Kell and Rodrigues in \cite[Theorem B and
Theorem C]{JKR15} showed that Markov Chains with regular
enough transition densities can be represented by continuous
random maps or random diffeomorphisms. 

We state below a result giving sufficient conditions
for representation by continuous random maps.

\begin{theorem}{\cite[Proposition 5.1]{JKR15}}
\label{thm:jost-kell-rodrigues}
Let $(p_x)_{x\in\M}$ be a family of probability measures,
where each $p_x$ is absolutely continuous with respect to
$\l$ and has positive H\"{o}lder continuous (for some
exponent $\alpha >0$) probability density $q_x$ and the
family $(q_x)_{x\in\M}$ is pointwise continuous for
$\l$-a.e. $x\in\M$.  Then $(p_x)_{x\in\M}$ can be
represented by random continuous maps
$(f_{\omega})_{\omega\in\Omega}$, that is, there exists a
probability measure $\nu$ on $C^0(\M,\M)$ so that
$p_x(A)=p(A|x) = \nu\{g : g(x) \in A \}=\int_A q_x\,\dif\l(x)$
for every Borel subset $A$.
\end{theorem}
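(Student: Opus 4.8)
The plan is to realise the kernel $(p_x)_{x\in\M}$ as the distribution of a single random continuous self-map of $\M$, the probability space being $(\M,\l)$ itself. The one thing to construct is a family of \emph{transport maps} $S_x\colon\M\to\M$ with $(S_x)_*\l=p_x$ for every $x$, depending continuously on $x$ in the sense that $x\mapsto S_x$ is continuous into $C^0(\M,\M)$ equipped with the uniform metric (under which, since $\M$ is compact metric, $C^0(\M,\M)$ is Polish, so Borel probability measures on it make sense). Granting such a family, put $\Omega:=\M$, endow it with $\l$, and define
\[
\Psi\colon\M\longrightarrow C^0(\M,\M),\qquad \Psi(\omega):=\bigl(x\mapsto S_x(\omega)\bigr),\qquad \nu:=\Psi_*\l .
\]
Continuity of $x\mapsto S_x$ makes each $\Psi(\omega)$ a genuine element of $C^0(\M,\M)$, and $\Psi$ is Borel measurable by separability of the target together with measurability of the evaluations $\omega\mapsto S_x(\omega)$; hence $\nu$ is a well-defined Borel probability measure. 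Writing $f_\omega:=\Psi(\omega)$, for every Borel $A\subset\M$ and every $x$,
\[
\nu\{g\colon g(x)\in A\}=\l\{\omega\in\M\colon S_x(\omega)\in A\}=\bigl[(S_x)_*\l\bigr](A)=p_x(A)=\int_A q_x\,\dif\l ,
\]
which is exactly the asserted representation. So everything reduces to building the maps $S_x$ with the stated continuity.

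For the construction of $S_x$ I would take the quadratic-cost optimal transport map from $\l$ to $p_x$ on the Riemannian manifold $\M$; its existence and uniqueness follow from McCann's theorem, since $\l$ is absolutely continuous. The hypotheses then enter as follows: pointwise continuity of $(q_x)$ and compactness of $\M$ give uniform bounds $\un q\le q_x\le\ov q$ for $\l$-a.e.\ point and every $x$; these bounds together with the positivity and $\alpha$-H\"older continuity of the densities feed into the interior regularity theory for the Monge--Amp\`ere / optimal-transport equation, which, read in coordinate charts (where, after a partition-of-unity argument using positivity, one is in the Euclidean Caffarelli-type setting), yields that each $S_x$ is a homeomorphism of $\M$ with a H\"older modulus that can be taken \emph{uniform in $x$}. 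If one prefers to bypass this machinery, an equally valid route is a concrete rearrangement: iterate nested finite measurable partitions of $\M$, at each stage matching atoms so that the $\l$-mass of a source atom equals the $p_x$-mass of the corresponding target atom (a one-dimensional CDF/Knothe--Rosenblatt step in charts), and pass to the limit; continuity of $x\mapsto q_x$ makes the matching, hence the limiting map, depend continuously on $x$.

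The decisive point, and the one I expect to be the main obstacle, is the continuity $x\mapsto S_x$ in $C^0(\M,\M)$. Pointwise continuity of $(q_x)$, the uniform bounds, and dominated convergence give $x_n\to x\implies p_{x_n}\to p_x$ in the Wasserstein distance; stability of optimal transport maps then yields $S_{x_n}\to S_x$ in $L^1(\l)$, hence $\l$-a.e.\ along a subsequence. Promoting this to uniform convergence of the whole sequence — which is what is needed for the limit to live in $C^0$, and for $\Psi$ above — is done by invoking the uniform H\"older/equicontinuity estimates on $\{S_x\}$ and Arzel\`a--Ascoli: a.e.\ convergence plus equicontinuity forces uniform convergence. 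The genuinely delicate parts are (i) making the a priori estimates on $S_x$ uniform in $x$ on a general compact Riemannian manifold, where one must contend with the geometry (charts, the cut locus, the Ma--Trudinger--Wang condition), and (ii) nothing else of substance, since the measurability of $\Psi$ and the verification of the representation identity are routine once $x\mapsto S_x$ is continuous. Finally, if one wants random diffeomorphisms rather than merely continuous maps (the companion assertion in \cite{JKR15}), one regularises each $S_x$ by Moser's deformation trick, preserving both the pushforward property $(S_x)_*\l=p_x$ and the continuous dependence on $x$.
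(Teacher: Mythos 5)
First, note that the paper does not prove this statement: it is imported verbatim from \cite[Proposition~5.1]{JKR15}, so there is no in-paper argument to compare against. Your reduction is nonetheless the right one, and it is essentially the mechanism used in \cite{JKR15}: realise the kernel as $f_\omega(x)=S_x(\omega)$ for a family of transport maps $(S_x)_*\l=p_x$ on a single probability space $(\M,\l)$, check that $\Psi(\omega)=S_\cdot(\omega)$ is Borel into the Polish space $C^0(\M,\M)$ via evaluations at a countable dense set, and push $\l$ forward to get $\nu$. That part of your write-up is correct and complete.

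The gap is exactly where you say it is, and it is not merely ``delicate'': the continuity of $x\mapsto S_x$ is not established, and your primary route cannot establish it in the stated generality. For the quadratic cost on a compact Riemannian manifold, continuity of the optimal transport map between positive H\"older (even smooth) densities requires the Ma--Trudinger--Wang condition; by Loeper's theorem this condition forces nonnegative sectional curvature, so on a generic compact $\M$ (which is all the hypotheses give you) there exist smooth positive densities whose optimal map is genuinely discontinuous. Consequently the uniform H\"older/equicontinuity estimates you invoke to upgrade the $L^1(\l)$-stability $S_{x_n}\to S_x$ to uniform convergence simply do not exist, and $L^1$-stability alone is useless here: it gives a.e.\ convergence along a subsequence with an exceptional $\omega$-set depending on the sequence $(x_n)$, which cannot yield ``for $\l$-a.e.\ $\omega$ the map $x\mapsto S_x(\omega)$ is continuous at \emph{every} $x$'' (an uncountable family of null sets). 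Your fallback (iterated Knothe--Rosenblatt matchings over nested partitions in charts) is sketched too loosely to rescue this: the limiting map is generically discontinuous across cell boundaries, and you give no mechanism forcing the matched cells, hence the map, to vary continuously in $x$. So the decisive step of the theorem --- producing \emph{some} family $(S_x)$ with $(S_x)_*\l=p_x$ and $x\mapsto S_x(\omega)$ continuous for a.e.\ $\omega$, which is where \cite{JKR15} put their actual work and their H\"older hypothesis on $q_x$ --- remains unproved in your proposal.
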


\subsection{Stationary probability measures. Decay of correlations}
\label{sec:st-meas-dc}

In this setting the notion of \emph{stationary measure}
replaces the notion of invariant measure by leaving the
perturbed map invariant in average over the
noise.
\begin{definition}[Stationary measure]
\label{def:stationary-measure}
We say that the probability measure $\mu$ on the Borel
subsets of $\M$ is stationary if
$\iint \mu(\varphi\circ f_{\omega}) \,\dif\theta(\omega)=\int
\varphi\,\dif\mu$
for every $\mu$-integrable $\varphi:\M\to\R$.
\end{definition}

We can give a deterministic representation of this random
setting using the skew product transformation
\begin{align}
 \label{def:skew-product}
S: \M\times \Omega \to \M\times \Omega, \qquad
(x,\o) \mapsto  (f_{\omega_1},\sigma(\o)),
\end{align}
where $\sigma:\Omega\to\Omega$ is the one-sided shift
$\sigma(\o)=\sigma(\omega_1,\omega_2,\ldots)=(\omega_2,
\omega_3, \ldots)$.
We remark that $\mu$ is stationary if and only if the
product measure $\mu\times \theta^\N$ is an $S$-invariant
measure.

Now the process is given by
\begin{equation}
\label{eq:def-rand-stat-stoch-proc-RDS2} 
X_n=\varphi\circ f^n_{\o},
\quad \mbox{for each } n\in\N,
\end{equation}
which can also be written as $X_n=\varphi\circ\pi\circ S^n$,
where $\pi:\M\times \Omega \to\M, (x,\o) \mapsto x$ is the
natural projection onto the first factor.  Note that the
stochastic process $X_0,X_1,\ldots$ is stationary since
$\mu$ is stationary.


Hence, the random evolution of the system is given by
a discrete time dynamical system $(\X,\B,\p,T)$,
where $\X$ is a topological space, $\mathcal B$ is the Borel
$\sigma$-algebra, $T:\X\to\X$ is a measurable map and $\p$
is a $T$-invariant probability measure, \ie
$\p(T^{-1}(B))=\p(B)$, for all $B\in \mathcal B$. 
We set $(\X,\mathcal B, \p, T)$ with $\X=\M\times \Omega$
and the product Borel $\sigma$-algebra $\mathcal B$ where
the product measure $\p=\mu\times\theta^\N$ is defined. The
random dynamics can now be read from the skew product map
$T=S$ since the second factor of $S$ depends only on the
first coordinate of the first factor
\begin{align*}
p_xA=\p\{(x,\o):(\pi\circ S)(x,\o)\in A\}
=[(\pi\circ S)_*\p]A=[(f^x)^*\theta]A.
\end{align*}




In our (random) setting, we will only be interested in
Banach spaces of functions that do not depend on
$\o\in\Omega$. Hence, we assume that $\phi,\psi$ are
functions defined on $\M$ and the correlation
between these two observables can be written in a simple
form.

\begin{definition}[Annealed decay of correlations]
\label{def:ann-cor}
Let \( \mathcal C_{1}, \mathcal C_{2} \)
denote Banach spaces of real valued measurable functions
defined on \( \M \).
We denote the \emph{annealed correlation} of non-zero
functions $\phi\in \mathcal C_{1}$ and
\( \psi\in \mathcal C_{2} \)
w.r.t.\ the measure $\mu\times\theta^\N$ as
\begin{equation} \label{eq:cor-random}
  \cv_{\mu\times\theta^\N}(\phi,\psi, n)\coloneqq\frac{1}{\|\phi\|_{\mathcal
      C_{1}}\|\psi\|_{\mathcal C_{2}}} \left|\int \left(\int \psi\circ
      f^n_{\o}\, \dif\theta^{\N}\right)\phi\, \dif\mu-\int  \phi\,
    \dif\mu\int \psi\, \dif\mu\right|.
\end{equation}


We say that we have \emph{annealed decay of correlations},
w.r.t.\ the measure $\mu\times\theta^\N$, for observables in
$\mathcal C_1$ \emph{against} observables in $\mathcal C_2$
if, for every $\phi\in\mathcal C_1$ and every
$\psi\in\mathcal C_2$, then it holds that
$\cv_{\mu\times\theta^\N}(\phi,\psi,n)\xrightarrow[n\to\infty]{}
0$.
\end{definition}

We say that we have \emph{annealed decay of correlations
  against $L^1$ observables} whenever we have decay of
correlations, with respect to the measure
$\mu\times\theta^\N$, for observables in $\mathcal C_1$
against observables in $\mathcal C_2$ and
$\mathcal C_2=L^1(\l)$ is the space of $\l$-integrable
functions on $\M$ and
$\|\psi\|_{\mathcal C_{2}}=\|\psi\|_1=\int
|\psi|\,\dif\l$.
Note that when $\mu$ is absolutely continuous
with respect to $\l$ and the respective Radon-Nikodym
derivative is bounded above and below by positive constants,
then
$L^1(\l)=L^1(\mu)$. 



\subsection{Regularity conditions on the observable function
  and the measure}
\label{sec:regular-condit-obser}

We assume that the r.v.\ $\varphi:\M\to\R\cup\{\pm\infty\}$
achieves a global maximum at $\zeta\in \M$ (we allow
$\varphi(\zeta)=+\infty$). 
We also assume that $\varphi$ and $\p$ are sufficiently regular so that:

\begin{enumerate}

\item[\namedlabel{item:U-ball}{(R1)}] 
for $u$ sufficiently close to $u_F \coloneqq\varphi(\zeta)$,  the event 
\begin{equation*}
U(u)=\{X_0>u\}=\{x\in\M:\; \varphi(x)>u\}
\end{equation*} corresponds to a topological ball centred at
$\zeta$. Moreover, the quantity $\p(U(u))$, as a function of
$u$, varies continuously on a neighbourhood of
$u_F$. 

\end{enumerate}

In what follows, an \emph{exceedance} of the level $u\in\R$
at time $j\in\N$ means that the event $\{X_j>u\}$ occurs. We
denote by $F$ the distribution function (d.f.)~of $X_0$, \ie
$F(x)=\p(X_0\leq x)$. Given any d.f.\ $F$, let
$\bar{F}=1-F$, the so-called \emph{tail distribution}, and
$u_F$ denote the right endpoint of the d.f.\ $F$, \ie
$ u_F=\sup\{x: F(x)<1\}.  $

\subsection{Extreme Value Laws}
\label{sec:extreme-value-laws}

Given the dynamically defined
time series $X_0,X_1,\ldots$ we want to study its extremal
behaviour. Hence we define a new sequence of random
variables $M_1, M_2,\ldots$ as the partial maximum of the
first $n$ random variables, \ie
\begin{equation}
\label{eq:Mn-definition}
M_n=\max\{X_0,\ldots,X_{n-1}\}.
\end{equation}

\begin{definition}
  We say that we have an \emph{Extreme Value Law}
  (\emph{EVL}) for $M_n$ if there is a non-degenerate d.f.\ $H:\R\to[0,1]$ with $H(0)=0$; 
  and if for every $\tau>0$, there exists a sequence of levels $u_n=u_n(\tau)$, $n=1,2,\ldots$,  such that
\begin{equation}
\label{eq:un}
  n\,\p(X_0>u_n)\to \tau,\;\mbox{ as $n\to\infty$}
\end{equation}
and for which the following holds
\begin{equation}
\label{eq:EVL-law}
\p(M_n\leq u_n)\to \bar H(\tau),\;\mbox{ as $n\to\infty$.}
\end{equation}
\end{definition}

\vspace{.5cm}
For every sequence $(u_n)_{n\in\N}$ satisfying \eqref{eq:un} we define:
\begin{equation}
\label{def:Un}
U_n \coloneqq\{X_0>u_n\}.
\end{equation} 

The normalising sequences $u_n$ come from the i.i.d.\
case. Namely, if $X_0,X_1,X_2,\ldots$ are independent and
identically distributed, then it is clear that
$\p(M_n\leq u)= (F(u))^n$ where $F$ is the d.f. of
$X_0$. Hence, condition \eqref{eq:un} implies that
\begin{equation}
\label{eq:iid-maxima}
\p(M_n\leq u_n)= (1-\p(X_0>u_n))^n\sim\left(1-\frac\tau
  n\right)^n\to\e^{-\tau}, \quad\text{as $n\to\infty$.}
\end{equation}
Moreover, the reciprocal is also true; see
\cite[Theorem~1.5.1]{LLR83} for more details. Note that in
this case $H(\tau)=1-\e^{-\tau}$ is the standard exponential
d.f.

When $X_0,X_1,X_2,\ldots$ are not independent, the standard
exponential law still applies under some conditions on the
dependence structure. These conditions are as follows.
\begin{condition}[$D_2(u_n)$]\label{cond:D2} We say that $D_2(u_n)$ holds for the sequence $X_0,X_1,\ldots$ if for all $\ell,t$
and $n$
\begin{align*}
|\p\left(X_0>u_n\cap
  \max\{X_{t},\ldots,X_{t+\ell-1}\}\leq u_n\right)-\p(X_0>u_n)
  \p(M_{\ell}\leq u_n)|\leq \gamma(n,t),
\end{align*}
where $\gamma(n,t)$ is decreasing in $t$ for each $n$ and
$n\gamma(n,t_n)\to0$ when $n\rightarrow\infty$ for some
sequence $t_n=o(n)$.
\end{condition}
Now, let $(k_n)_{n\in\N}$ be a sequence of integers such that
\begin{equation}
\label{eq:kn-sequence-1}
k_n\to\infty\quad \mbox{and}\quad  k_n t_n = o(n).
\end{equation}
\begin{condition}[$D'(u_n)$]\label{cond:D'} We say that $D'(u_n)$
holds for the sequence $X_0, X_1, X_2, \ldots$ if there exists a sequence $(k_n)_{n\in\N}$ satisfying \eqref{eq:kn-sequence-1} and such that
\begin{equation}
\label{eq:D'un}
\lim_{n\rightarrow\infty}\,n\sum_{j=1}^{\lfloor n/k_n \rfloor}\p( X_0>u_n,X_j>u_n)=0.
\end{equation}
\end{condition}
By \cite[Theorem~1]{FF08a}, if conditions $D_2(u_n)$ and
$D'(u_n)$ hold for $X_0, X_1,\ldots$ then there exists an
EVL for $M_n$ and $H(\tau)=1-\e^{-\tau}$. Besides, as it can
be seen in \cite[Section~2]{FF08a}, condition $D_2(u_n)$
follows immediately if $X_0, X_1,\ldots$ is given by
\eqref{eq:def-stat-stoch-proc-DS} and the system has
sufficiently fast decay of correlations.

In this paper, we extend the results for the random case
from Aytaç, Freitas and Vaienti \cite{AFV15} to transitive
maps and for more general random perturbations.  The class
of maps that can be perturbed includes interval maps with
unbounded derivatives, as in the Lorenz map for finite
branch case; interval maps with infinitely many branches,
and also maps in higher dimensional manifolds. The random
perturbation setting is not restricted to additive noise as
in \cite{AFV15}. Moreover, we also consider Markov Chains
under conditions that guarantee their representation by
random maps satisfying conditions
\eqref{eq:random-perturbation} and
\eqref{eq:noise-distribution}, which imply an almost uniform
distribution of perturbed images on a neighbourhood of the
original value of the unperturbed map.

First of all, we show that, for transitive systems, we have
decay of correlations against $L^1$ observables for the type
of random perturbations we consider here.

\begin{maintheorem}\label{mthm:trans->DC}
  Every measurable map $f:\M\to\M$ which is
  $\l$-a.e.\ continuous admitting $x_0\in\M$ so that
  $\{f^nx_0:n\ge1\}$ is both a dense subset of $\M$ and a set of
  continuity points of $f$ is such that any random
  perturbation of $f$ satisfying
  \eqref{eq:random-perturbation} and
  (\ref{eq:noise-distribution}) has exponential decay of
  correlations against $L^1$ observables.
\end{maintheorem}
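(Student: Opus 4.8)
The plan is to show that the random perturbation of $f$ defines an aperiodic Harris chain satisfying Doeblin's condition, and then invoke the classical theory of uniformly ergodic Markov chains to get exponential decay of correlations. This is the content of Proposition~\ref{prop:ran-per-aper-Harris} mentioned in the introduction, so the bulk of the work is establishing that proposition's hypotheses from transitivity plus \eqref{eq:random-perturbation} and \eqref{eq:noise-distribution}. First I would set up the Markov chain with transition kernel $p_x(A) = [(f^x)_*\theta](A)$ as in \eqref{eq:transition-prob}, and record from \eqref{eq:noise-distribution} that for $\l$-a.e.\ $x$ the one-step kernel has a density $q_x$ with $\un{q}\le q_x\le\ov q$ on $\supp(f^x_*\theta)$, which by \eqref{eq:random-perturbation} contains $B_{\rho_0}(fx)$. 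Thus a single step already smears mass uniformly (up to bounded density) over a ball of radius $\rho_0$ around $fx$.

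The key step is a uniform "minorization": I want to find $n_0\in\N$, a constant $c>0$ and a probability measure $\nu$ on $\M$ — ideally $\nu$ comparable to Lebesgue measure on a ball — such that $p^{n_0}_x(A)\ge c\,\nu(A)$ for \emph{all} $x\in\M$ and all Borel $A$. Here $p^{n_0}_x$ denotes the $n_0$-step transition probability, which is the law of $f^{n_0}_{\o}(x)$ under $\theta^\N$. To produce this I would use transitivity: fix the point $x_0$ with $\{f^n x_0:n\ge1\}$ dense in $\M$ and consisting of continuity points of $f$. Iterating the one-step estimate, after $k$ steps the random orbit of \emph{any} starting point $x$ covers — with density bounded below by $\un q^{\,k}$ times a uniform factor — a neighbourhood of $f^k x$; more carefully, by induction the $k$-step kernel dominates $\un q^{\,k}\l$ restricted to a set containing $B_{\rho_0/2}(f^k x)$ for suitable choices, using that $f^x(\supp\theta)\supset B_{\rho_0}(fx)$ for a.e.\ $x$ and that preimages under the (a.e.\ continuous) map $f$ of balls contain balls near continuity points. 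Because the forward orbit of $x_0$ is dense, there is some iterate $m$ with $f^m x_0$ within $\rho_0/4$ of any prescribed reference point $\zeta_0\in\M$; then a chaining argument — first move from an arbitrary $x$ into a fixed small ball in a bounded number $m_1$ of steps (possible since a ball of radius $\rho_0$ is reached in one step and, by density of the orbit of $x_0$ together with continuity, one can steer into a fixed ball $B_{\delta}(\zeta_0)$ within a number of steps uniform in $x$), and then in a further fixed number of steps spread uniformly over $B_{\delta}(\zeta_0)$ — yields the minorization with $\nu = \l(\cdot\cap B_{\delta}(\zeta_0))/\l(B_{\delta}(\zeta_0))$, $n_0 = m_1+m_2$, and $c = \un q^{\,n_0}\l(B_\delta(\zeta_0))\cdot(\text{const})$. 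I would also check aperiodicity, which follows because the minorizing set $B_\delta(\zeta_0)$ is "small" and the return structure has no arithmetic obstruction: the one-step kernel from points in $B_\delta(\zeta_0)$ again charges a ball around its image with positive density, and by density of the orbit of $x_0$ one can return to $B_\delta(\zeta_0)$ in $n_0$ and in $n_0+1$ steps, whose gcd is $1$.

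Once the uniform minorization $\inf_{x}p^{n_0}_x(\cdot)\ge c\,\nu(\cdot)$ is in hand, Doeblin's condition holds and the chain is uniformly (hence geometrically) ergodic: there are constants $C>0$, $0<\lambda<1$ with $\|p^n_x - \mu\|_{\tv}\le C\lambda^n$ uniformly in $x$, where $\mu$ is the unique stationary measure. Translating this into the annealed correlation of Definition~\ref{def:ann-cor}: for $\phi\in\mathcal C_1$ and $\psi\in L^1(\l)$,
\begin{equation*}
\int\Big(\int\psi\circ f^n_{\o}\,\dif\theta^\N\Big)\phi\,\dif\mu - \int\phi\,\dif\mu\int\psi\,\dif\mu = \int\Big(\int\psi\,\dif p^n_x - \int\psi\,\dif\mu\Big)\phi(x)\,\dif\mu(x),
\end{equation*}
and since $\mu$ is equivalent to $\l$ with density bounded above and below (a consequence of \eqref{eq:noise-distribution}, so that $L^1(\mu)=L^1(\l)$), one bounds $|\int\psi\,\dif p^n_x - \int\psi\,\dif\mu|$ using the $\tv$-decay against the $L^\infty$-norm of a representative — or, more cleanly, via the equivalent operator formulation: the transfer operator of the chain acting on densities in $L^1(\l)$ contracts the "zero-average" subspace geometrically, so $\|P^n g - (\int g\,\dif\l)h_\mu\|_1\le C\lambda^n\|g\|_1$ where $h_\mu=\dif\mu/\dif\l$, and duality gives $\cv_{\mu\times\theta^\N}(\phi,\psi,n)\le C'\lambda^n\|\phi\|_\infty$ for $\psi\in L^1(\l)$, with $\|\phi\|_{\mathcal C_1}=\|\phi\|_\infty$. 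This is exactly exponential decay against $L^1$ observables.

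The main obstacle is the uniform minorization step — specifically, obtaining the lower bound $p^{n_0}_x(A)\ge c\,\nu(A)$ with $n_0$ and $c$ \emph{independent of the starting point $x$}. One-step positivity of density only covers a $\rho_0$-ball around $fx$, which moves with $x$; turning this into a fixed reference ball requires the density of the orbit of $x_0$ together with the a.e.\ continuity of $f$ to argue that every $x$ can be steered into $B_\delta(\zeta_0)$ within a uniformly bounded number of steps. Care is needed here because $f$ is only $\l$-a.e.\ continuous and may have unbounded derivative or infinitely many branches, so the geometric control of how balls propagate must be done measure-theoretically (tracking the density bounds through compositions via \eqref{eq:noise-distribution}) rather than by naive diameter estimates; the hypothesis that the dense orbit consists of continuity points of $f$ is precisely what makes the chaining robust. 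A secondary technical point is verifying that the stationary measure $\mu$ supplied by the abstract Markov-chain theory coincides with (and inherits the density bounds of) the one built into the setup, so that $L^1(\l)=L^1(\mu)$ and the correlation identity above is legitimate; this follows from \eqref{eq:noise-distribution} applied to the stationarity equation.
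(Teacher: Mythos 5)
Your proposal follows essentially the same route as the paper: realise the perturbation as a Markov chain with one-step densities bounded below by $\un{q}$ on $B_{\rho_0}(fx)$, chain these bounds along a finite, $\rho_0/4$-dense segment of the orbit of $x_0$ to steer any starting point into a fixed reference set in a uniformly bounded number of steps, deduce Harris/Doeblin and hence uniform ergodicity, and convert total-variation decay of $p^n_x$ to $\mu$ into the annealed correlation bound using that $\dif\mu/\dif\l$ is bounded away from $0$ and $\infty$. The only substantive difference is that the paper never needs the single-time uniform minorization $p^{n_0}_x\ge c\,\nu$ that you identify as your main obstacle (it allows the steering time $n(z)\le 2N$ to depend on $z$, verifies the Harris conditions and Doeblin's condition for $k=1$ separately, and proves aperiodicity via the connectedness of $\M$ rather than a gcd-of-return-times argument before invoking Meyn--Tweedie), which makes its version of this step technically lighter than yours.
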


We remark that the assumptions on the underlying unperturbed
dynamics are very weak and the conclusion in
Theorem~\ref{mthm:trans->DC} is rather strong.

The assumptions on the random perturbation in
Theorem~\ref{thm:jost-kell-rodrigues} ensure that
$\supp (p_x)\supset B(fx,\rho(x))$, that is, the perturbed
images cover a full neighbourhood of the image of the
original map $f$; and $q_x\mid B(fx,\rho(x))\ge g(x)$, i.e.,
the distribution of images in this neighbourhood is
essentially uniform, for some $\l$-a.e.\ continuous map
$f:\M\to\M$ and continuous functions $\rho,g:\M\to\R^+$.
Hence we can state the following version of
Theorem~\ref{mthm:trans->DC} in the language of Markov
chains.

\begin{maintheorem}
  \label{mthm:random-pert-decay}
  Let $(p_x)_{x\in\M}$ be a continuous family of probability
  measures such that $p_x=q_x\l$, where $q_x$ is a positive
  H\"{o}lder continuous (for some exponent $\alpha >0$)
  probability density $q_x$ varying continuously with
  $x\in\M$ with respect to the $C^0$-topology. Assume that
  there are $\rho_0>0,\ov{q}>\un{q}>0$ and a full $\l$-measure
  subset $Y$ so that the map $f:\M\to\M$ is continuous on
  $Y$ satisfying $\supp(q_x)\supset B(fx,\rho_0)$ and
  $\un{q}\le q_x\le\ov{q}$ for $x\in Y$ and admitting a
  point $x_0\in\M$ so that $\{f^nx_0: n\ge0\}$ is both dense
  in $\M$ and contained in $Y$.

  Then the Markov Chain defined by $(p_x)_{x\in\M}$ has a
  unique stationary measure $\mu$ with exponential decay of
  correlations against $L^1$ observables.
\end{maintheorem}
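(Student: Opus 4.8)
The plan is to deduce Theorem~\ref{mthm:random-pert-decay} from Theorem~\ref{mthm:trans->DC}: I would use the representation of Theorem~\ref{thm:jost-kell-rodrigues} to realise the Markov chain $(p_x)_{x\in\M}$ as a uniformly continuous random perturbation of $f$ fitting \eqref{eq:random-perturbation}--\eqref{eq:noise-distribution}, apply Theorem~\ref{mthm:trans->DC} to get exponential decay of correlations against $L^1$ observables, and then read off existence, uniqueness and absolute continuity of the stationary measure from Proposition~\ref{prop:ran-per-aper-Harris} and standard Markov chain facts. Concretely the steps are: (1) represent the chain by random continuous maps; (2) verify that this representation satisfies \eqref{eq:random-perturbation} and \eqref{eq:noise-distribution}; (3) verify that $f$ meets the hypotheses of Theorem~\ref{mthm:trans->DC}; (4) assemble the conclusion.

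\emph{Steps (1)--(2).} The hypotheses on $(p_x)$ --- $p_x=q_x\l$ with each $q_x$ positive and H\"older of exponent $\alpha>0$, and $x\mapsto q_x$ continuous in the $C^0$ topology, hence pointwise continuous for $\l$-a.e.\ $x$ --- are exactly those of Theorem~\ref{thm:jost-kell-rodrigues}, which produces a probability measure $\nu$ on $C^0(\M,\M)$ with $p_x(A)=\nu\{g:g(x)\in A\}$ for all Borel $A\subset\M$. Set $\Omega=X=C^0(\M,\M)$, $\theta=\nu$ and $f_\omega=\omega$, so $f^x(\omega)=\omega(x)$ is continuous in $\omega$ and $(f^x)_*\theta=p_x=q_x\l$; since $\un q\le q_x\le\ov q$ on the full-measure set $Y$, this is \eqref{eq:noise-distribution}. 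The covering relation in \eqref{eq:random-perturbation} amounts, via $(f^x)_*\theta=p_x$, to $\supp(p_x)\supset B(fx,\rho_0)$, which is the stated hypothesis $\supp(q_x)\supset B(fx,\rho_0)$ (and is automatic when $q_x$ is strictly positive on $\M$); passing if necessary to a $\sigma$-compact set of maps of full $\theta$-measure and using continuity of $f^x$ then yield $f^x(\supp\theta)\supset B(fx,\rho_0)$, after harmlessly shrinking $\rho_0$. The normalisation $f_0=f$ only fixes the centre $0=\omega^*$ of the noise and can be arranged by adjoining $f$ with zero $\theta$-weight; moreover the proof of Theorem~\ref{mthm:trans->DC} uses the unperturbed map only through this covering relation (which lets random orbits shadow a prescribed $f$-orbit) together with the density in step (3).

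\emph{Step (3).} Since $Y$ has full $\l$-measure and $f|_Y$ is continuous, $f$ is $\l$-a.e.\ continuous; and $\{f^nx_0:n\ge1\}$, which is $\{f^nx_0:n\ge0\}$ with at most the point $x_0$ deleted (and $\M$ has no isolated points), is again dense in $\M$ and is contained in $Y$, hence consists of continuity points of $f$. Thus $f$ and the perturbation $(f_\omega,\theta)$ satisfy all the hypotheses of Theorem~\ref{mthm:trans->DC}, giving exponential decay of correlations against $L^1$ observables, in the annealed sense of Definition~\ref{def:ann-cor}, for the stationary measure.

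\emph{Step (4).} The chain is Feller since $x\mapsto p_x$ is continuous and $\M$ is compact, so a stationary $\mu$ exists by Krylov--Bogolyubov; by Proposition~\ref{prop:ran-per-aper-Harris} the chain satisfies Doeblin's condition, which forces uniqueness of $\mu$ (and uniform ergodicity). Finally, every stationary $\mu$ satisfies $\mu(A)=\int p_x(A)\,\dif\mu(x)=\int_A\bigl(\int q_x(y)\,\dif\mu(x)\bigr)\dif\l(y)$, so $\mu=h\l$ with $\un q\le h\le\ov q$; in particular $L^1(\mu)=L^1(\l)$, in agreement with the statement. The main obstacle is the bookkeeping of step (2): making sure the abstract representation of Theorem~\ref{thm:jost-kell-rodrigues} fits \eqref{eq:random-perturbation}--\eqref{eq:noise-distribution} literally --- the image-of-support versus support-of-pushforward issue for the covering relation, and the status of the normalisation $f_0=f$ --- everything else being a direct appeal to Theorem~\ref{mthm:trans->DC}, Proposition~\ref{prop:ran-per-aper-Harris}, and standard facts about Feller and Doeblin chains.
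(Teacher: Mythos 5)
Your argument is correct in substance, but it is not the route the paper takes. The paper does not prove Theorem~\ref{mthm:random-pert-decay} by first representing the chain as a random maps scheme and then invoking Theorem~\ref{mthm:trans->DC} as a black box; it proves Theorems~\ref{mthm:trans->DC} and~\ref{mthm:random-pert-decay} \emph{simultaneously}, by running the Harris/Doeblin argument of Proposition~\ref{prop:ran-per-aper-Harris} directly at the level of the transition densities $q_x$ (which is all that argument ever uses --- the lower bound $q_x\ge\un{q}$ on $B(fx,\rho_0)$ along the dense orbit, not the literal covering relation $f^x(\supp\theta)\supset B_{\rho_0}(fx)$), then citing the equivalence in \cite[Theorem 16.0.2]{MT09} to get uniform ergodicity $\|p_x^n-\mu\|\le C\lambda^{-n}$, and finally converting this total-variation estimate into the $L^1$-against-$L^\infty$ correlation bound. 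The representation theorem of \cite{JKR15} appears in the paper only as motivation for why the Markov-chain formulation is a genuine generalisation. Your reduction buys a clean logical dependence (Theorem~\ref{mthm:random-pert-decay} follows formally from Theorem~\ref{mthm:trans->DC}), but at the cost of exactly the bookkeeping you flag in step~(2): adjoining $f$ with zero $\theta$-weight violates the requirement $\supp(\theta)\supset V_{\eps_0}(0)$, and the $\sigma$-compactness argument gives $f^x(\supp\theta)\supset B(fx,\rho_0)$ only up to $\l$-null sets (one only controls $\overline{f^x(\supp\theta)}=\supp(p_x)$), so a literal black-box application of Theorem~\ref{mthm:trans->DC} does not quite go through. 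The clean repair is precisely your own observation that the proof of Theorem~\ref{mthm:trans->DC} uses the perturbation only through the density bounds --- which, made explicit, collapses your argument into the paper's: apply Proposition~\ref{prop:ran-per-aper-Harris} and the uniform-ergodicity estimate directly to $(q_x)_{x\in\M}$. Your step~(4) (Feller plus Krylov--Bogolyubov for existence, Doeblin for uniqueness, $h=\int q_x\,\dif\mu(x)$ pinched between $\un{q}$ and $\ov{q}$ since each $q_x$ is everywhere positive) is consistent with what the paper does in Subsection~\ref{sec:stationary-density}, though the paper derives strict positivity of $h$ by a slightly more involved Lebesgue-density-point argument that does not assume the bounds hold on all of $\M$.
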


Using this general result on decay of correlations for
random maps/Markov Chains we show that under suitable random
perturbation of the original transitive system, we get a
standard exponential distribution for the extreme values as
well as the hitting time statistics for any point
$\zeta\in\M$.

\begin{maintheorem}
\label{mthm:random-EVL-finite}
Let $(\M\times \Omega, \mathcal B, \mu\times\theta^\N, S)$
be a dynamical system where $\M$ is a finite dimensional
compact Riemannian manifold and
$f:\M\to \M$ is a map which is continuous on the full
$\l$-measure subset $Y$ admitting a point $x_0\in\M$ so that
$\{f^nx_0: n\ge0\}$ is both dense in $\M$ and a contained in
$Y$.  Assume that $f$ is randomly perturbed by a random maps
scheme satisfying \eqref{eq:random-perturbation} and
\eqref{eq:noise-distribution} or by a Markov Chain given by
a family $(p_x)_{x\in\M}$ of transition probabilities
satisfying the conditions of
Theorem~\ref{mthm:random-pert-decay}.

For any point $\zeta\in\M$, consider that
$X_0, X_1,\ldots$ is defined as in
\eqref{eq:def-rand-stat-stoch-proc-RDS2}, let $u_n$ be such
that \eqref{eq:un} holds and assume that $U_n$ is defined as
in \eqref{def:Un}.

Then the stochastic process $X_0, X_1,\ldots$ satisfies
$D_2(u_n)$ and $D'(u_n)$, which implies that we have an EVL
for $M_n$ such that $\bar H(\tau)=\e^{-\tau}$.
\end{maintheorem}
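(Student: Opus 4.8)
The plan is to deduce both conditions $D_2(u_n)$ and $D'(u_n)$ from the exponential decay of correlations against $L^1$ observables established in Theorem~\ref{mthm:random-pert-decay} (via Theorem~\ref{mthm:trans->DC}), following the strategy of \cite{AFV15,FF08a}. The key point is that the hypotheses on $f$ and on the random perturbation are precisely those required to invoke Theorem~\ref{mthm:trans->DC} (or Theorem~\ref{mthm:random-pert-decay} in the Markov chain formulation): the unperturbed map is $\l$-a.e.\ continuous with a dense forward orbit lying in its continuity set, so the associated Markov chain is an aperiodic Harris chain satisfying Doeblin's condition, hence uniformly ergodic with a unique stationary measure $\mu$ enjoying $\cv_{\mu\times\theta^\N}(\phi,\psi,n)\le C\vartheta^n$ for some $\vartheta\in(0,1)$, all $\phi\in L^\infty(\mu)$ and all $\psi\in L^1(\l)=L^1(\mu)$ (the latter identification holding because $\mu=h\,\l$ with $\un q\le h\le\ov q$).

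First I would verify condition $D_2(u_n)$. Taking $\phi=\I_{U(u_n)}$ (bounded, with $\|\phi\|_\infty=1$) and $\psi=\I_{\{M_\ell\le u_n\}\text{ as a function pushed to }\M}$ — more precisely, writing the event $\{\max\{X_t,\dots,X_{t+\ell-1}\}\le u_n\}$ as $\{X_0>u_n\}$ composed with $f_{\o}^t$ and intersected with the $\ell$-step survival event — one recognizes the left-hand side of $D_2(u_n)$ as an annealed correlation at time $t$. Since $\psi$ is an indicator it lies in $L^1(\l)$ with $\|\psi\|_1\le 1$, so exponential decay gives $\gamma(n,t)=C\vartheta^t$, which is decreasing in $t$; choosing $t_n=(\log n)^2$ (any sequence with $t_n\to\infty$, $t_n=o(n)$) yields $n\gamma(n,t_n)=Cn\vartheta^{(\log n)^2}\to 0$. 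Thus $D_2(u_n)$ holds, and indeed holds for \emph{every} sequence $(u_n)$ satisfying \eqref{eq:un}.

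Next I would establish $D'(u_n)$, which is the more delicate step. The idea is to split the sum $n\sum_{j=1}^{\lfloor n/k_n\rfloor}\p(X_0>u_n,X_j>u_n)$ into a short-range block $j\le R$ (for a fixed large $R$) and a long-range block $R<j\le\lfloor n/k_n\rfloor$. For the long-range part, $\p(X_0>u_n, X_j>u_n)=\int (\I_{U_n}\circ f_{\o}^j)\,\I_{U_n}\,\dif(\mu\times\theta^\N)$, which by decay of correlations is within $C\vartheta^j$ of $\p(U_n)^2\sim(\tau/n)^2$; summing over $j$ and multiplying by $n$ gives a bound of order $n\cdot (n/k_n)\cdot(\tau/n)^2 + n\sum_{j>R}C\vartheta^j = O(\tau^2/k_n)+ O(n\vartheta^R)$, and with $k_n\to\infty$ and $R=R(n)$ growing slowly (e.g.\ $R=\lfloor\log^2 n\rfloor$) both pieces vanish. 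For the short-range part $j\le R$, the crucial observation — and the main obstacle — is that the noise smears the dynamics: by \eqref{eq:noise-distribution} the one-step transition measure $(f^x)_*\theta$ has density bounded by $\ov q$ with respect to $\l$, so for every $j\ge1$ and every $x$, $\p(X_j>u_n\mid \text{position }x\text{ at time }0)=p_x^{(j)}(U_n)\le \ov q\,\l(U_n)$ where $p_x^{(j)}$ is the $j$-step transition probability, which still has an $\l$-density bounded by $\ov q$ (this is where the uniform-cover hypothesis genuinely enters: a single application of the noise already produces an absolutely continuous, boundedly-dense law). Hence $\p(X_0>u_n,X_j>u_n)\le \ov q\,\l(U_n)\,\p(U_n)\le C'\l(U_n)\cdot(\tau/n)$. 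Since $\mu=h\l$ with $h\ge\un q$, we have $\l(U_n)\le \un q^{-1}\mu(U_n)=\un q^{-1}\p(U_n)\sim \un q^{-1}\tau/n$, so each short-range term is $O(1/n^2)$ and the block contributes $n\cdot R\cdot O(1/n^2)=O(R/n)\to 0$. Combining the two blocks gives \eqref{eq:D'un}.

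Finally, having checked $D_2(u_n)$ and $D'(u_n)$ for the stationary process $X_0,X_1,\dots$ arising from $\varphi\circ\pi\circ S^n$, I would invoke \cite[Theorem~1]{FF08a} to conclude that an EVL for $M_n$ exists with $\bar H(\tau)=\e^{-\tau}$; the normalizing sequence $u_n$ exists because by $(R1)$ the map $u\mapsto\p(U(u))$ is continuous near $u_F$, so \eqref{eq:un} can be solved. I expect the genuine content to be the short-range estimate in $D'(u_n)$: it is exactly there that one must use the absolute continuity and density bounds of the perturbed transition kernels rather than any recurrence property of the deterministic map, which is the whole philosophy of the paper — and one must be slightly careful that the estimate $p_x^{(j)}\ll\l$ with bounded density holds for $j\ge1$ uniformly, which follows by induction from \eqref{eq:noise-distribution} together with the Chapman–Kolmogorov relation.
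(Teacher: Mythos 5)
Your proposal follows essentially the same route as the paper: $D_2(u_n)$ is deduced from the exponential decay of correlations against $L^1$ observables given by Theorems~\ref{mthm:trans->DC} and~\ref{mthm:random-pert-decay}, and $D'(u_n)$ from the uniform upper bound $\ov q$ on the transition densities together with a positive lower bound on the stationary density. Two remarks. First, the short-range/long-range splitting in your treatment of $D'(u_n)$ is superfluous: your ``short-range'' estimate $\p(X_0>u_n,\,X_j>u_n)\le\ov q\,\l(U_n)\,\mu(U_n)$ is valid for \emph{every} $j\ge1$ (only the last step of the $j$-step kernel is needed, exactly as you observe via Chapman--Kolmogorov), so the entire sum is bounded at once by $n\lfloor n/k_n\rfloor\cdot\frac{\ov q}{\un{h}}\,\mu(U_n)^2\le\frac{\ov q}{\un{h}}\,\big(n\mu(U_n)\big)^2/k_n\to0$; this is precisely the paper's argument, which makes no appeal to decay of correlations in this step. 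Second, your justification of the lower bound on the stationary density, namely $h\ge\un q$, is not correct as stated: condition \eqref{eq:noise-distribution} bounds $q_x$ from below only on $\supp(f^x_*\theta)$, which is merely required to contain $B_{\rho_0}(fx)$ and need not be all of $\M$, so $\mu(A)=\int\dif\mu(x)\int_A q_x\,\dif\l$ does not directly yield $h\ge\un q$. The paper devotes Subsection~\ref{sec:stationary-density} to proving $h\ge\un{h}>0$ $\l$-a.e., using the dense orbit of $x_0$ to reach a neighbourhood of any Lebesgue density point in a uniformly bounded number of steps $n(z)\le 2N$, which produces a smaller (but still positive) constant $\un{h}$. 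Your argument only needs \emph{some} positive lower bound on $h$ to convert $\l(U_n)$ into $\mu(U_n)$, so this is repairable, but that lower bound is a genuine step requiring the transitivity hypothesis and cannot be read off from \eqref{eq:noise-distribution} alone.
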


\subsection{Hitting/Return Time Statistics}
\label{sec:hittingr-time-statis}

Next we consider the second approach in the statistical
study of rare events. In the deterministic setting the
definition of first hitting/return time (function) is given
as follows.

Given a set $A\in\B$ we define a function that we refer to
as \emph{first hitting time function} to $A$ and denote by
$r_A:\X\to\N\cup\{+\infty\}$ where
\begin{equation*}
r_A(x)=\min\left\{j\in\N\cup\{+\infty\}:\; f^j(x)\in A\right\}.
\end{equation*}
The restriction of $r_A$ to $A$ is called the \emph{first
  return time function} to $A$. We define the \emph{first
  return time} to $A$, which we denote by $R(A)$, as the
minimum of the return time function to $A$, \ie
\[
R(A)=\min_{x\in A} r_A(x).
\]
In the random setting, one has to a make choice regarding
the type of definition for the first hitting/return times
(functions). Essentially, there are two possibilities. The
\emph{quenched} perspective which consists of fixing a
realisation $\o\in\Omega$ and defining the objects in the
same way as in the deterministic case. The \emph{annealed}
perspective consists of defining the same objects by
averaging over all possible realisations $\o$. (We refer to
\cite{MR11} for more details on both perspectives.)  In
\cite{AFV15}, the quenched perspective was used to define
hitting/return times (functions) as it facilitates the
connection between EVL and Hitting/Return Time Statistics in
the random setting. Here, we follow the same setting.

For some $\o\in\Omega$ fixed, some $x\in \M$ and
$A\subset \M$ measurable, we define the \emph{first random
  hitting time}
\begin{equation*}
\label{eq:rht}
r_A^{\o}(x) \coloneqq \min\{j\in\N:\; f_{\o}^j(x)\in A\}
\end{equation*}
and the \emph{first random return} from A to A as
\begin{equation*}
\label{eq:rrt}
R^{\o}(A)=\min\{r_A^{\o}(x):\; x\in A\}.
\end{equation*}
\begin{definition}
\label{def:HTS/RTS}
Given a sequence of measurable subsets of $\X$,
$(V_n)_{n\in \N}$, such as $\p(V_n)\to 0$, the system has
(random) \emph{Hitting Time Statistics} (\emph{HTS}) $G$ for
$(V_n)_{n\in \N}$ if for all $t\ge 0$
\begin{equation}\label{eq:def-HTS-law}
 \p\left(r_{V_n}\leq\frac t{\p(V_n)}\right)\to G(t) \;\mbox{ as $n\to\infty$,}
\end{equation}
and the system has (random) \emph{Return Time Statistics}
(\emph{RTS}) $\tilde G$ for $(V_n)_{n\in \N}$ if for all
$t\ge 0$
\begin{equation}\label{eq:def-RTS-law}
  \p_{V_n}\left(r_{V_n}\leq\frac t{\p(V_n)}\right)\to\tilde G (t)\;\mbox{ as $n\to\infty$}.
\end{equation}
We observe that in the random setting,
$\X=\M\times\Omega$, $\p=\mu\times\theta^\N$, $T=S$ as defined
in \eqref{def:skew-product}, $V_n=V_n^*\times\Omega$,
where $V_n^*\subset \M$ and $\mu(V_n^*)\to 0$ as
$n\to \infty$.
\end{definition}
We also observe that
\[
\p\left(r_{V_n}\leq\frac{t}{\p(V_n)}\right)
=
\mu\times\theta^\N
\left(r^{\o}_{V_n^*}\leq\frac{t}{\mu(V_n^*)}\right).
\]
The normalising sequences to obtain HTS/RTS, are motivated
by Kac's Lemma. It asserts that the expected value of $r_A$
w.r.t.\ $\p_A$ is $\int_A r_A~d\p_A =1/\p(A)$. So, the
appropriate normalising factor in the study of the
fluctuations of $r_A$ on $A$ is $1/\p(A)$.

The relation between the existence of HTS and that of RTS is
given by the Main Theorem in \cite{HLV05}. It states that a
system has HTS $G$ if and only if it has RTS $\tilde G$ and
\begin{equation}
\label{eq:HTS-RTS}
G(t)=\int_0^t(1-\tilde G(s))\,\dif s.
\end{equation}
So, the existence of exponential HTS is equivalent to the
existence of exponential
RTS. 

In \cite{FFT10}, the link between HTS/RTS (for balls) and
EVLs of stochastic processes given by
\eqref{eq:def-stat-stoch-proc-DS} was established for
invariant measures $\upsilon$ absolutely continuous w.r.t.\
$\l$. Essentially, it was proved that if such time series
have an EVL $H$ then the system has HTS $H$ for balls
``centred'' at $\zeta$ and vice
versa. 
(Recall that having HTS $H$ is equivalent to saying that the
system has RTS $\tilde H$, where $H$ and $\tilde H$ are
related by \eqref{eq:HTS-RTS}). This was based on the
elementary observation that for stochastic processes given
by \eqref{eq:def-stat-stoch-proc-DS} we have:
\begin{equation}
\label{eq:rel-Mn-r}
f^{-1}(\{M_n\leq u\})=\{r_{\{X_0>u\}}>n\}.
\end{equation}
This connection was exploited to prove EVLs using tools from
HTS/RTS and the other way around. In \cite{FFT11}, the
authors carried the connection further to include more
general measures, which, in particular, allowed the
coauthors to obtain the connection in the random setting in
\cite{AFV15}. For that, it was sufficient to use the skew
product map to look at the random setting as a deterministic
system and to take the observable
$\varphi\circ\pi:\M\times\Omega\to\R\cup\{+\infty\}$ defined
as in (\ref{eq:def-rand-stat-stoch-proc-RDS2}) with
$\varphi:\M\to\R\cup\{+\infty\}$ as in \cite[equation
(4.1)]{FFT11}. Namely,
\begin{equation} \label{eq:observation} 
\varphi:\M\to \R\cup\{+\infty\}, \qquad
x\mapsto g\left(\p(B_{\dist(x,\zeta)}(\zeta))\right)
\end{equation}
where $\zeta$ is a chosen point in the phase space $\M$ and
the function $g:[0,+\infty)\to\R\cup\{+\infty\}$ is such
that $0$ is a global maximum ($+\infty$ is allowed), $g$ is
a strictly decreasing bijection in a neighbourhood of $0$
and has one of the three types coming from the Classical
Extreme Value Theory; see \cite{FFT11}.
Then \cite[Theorems~1 and 2]{FFT11} guarantee that if we
have an EVL, in the sense that \eqref{eq:EVL-law} holds for
some d.f.\ $H$, then we have HTS for sequences
$\{V_n\}_{n\in\N}$, where $V_n=B_{\delta_n}\times \Omega$
and $\delta_n\to0$ as $n\to\infty$, with $G=H$ and
viceversa.

Using the connection between EVLs and HTS/RTS provided by
\cite{AFV15}, we immediately get from
Theorem~\ref{mthm:random-EVL-finite}

 \begin{maincorollary}
   \label{cor:random-EVL=>HTS} Under the same hypothesis of
   Theorem~\ref{mthm:random-EVL-finite} we have exponential
   HTS/RTS for balls around $\zeta$, in the sense that
   \eqref{eq:def-HTS-law} and \eqref{eq:def-RTS-law} hold
   with $G(t)=\tilde G(t)=1-\e^{-\t}$ and
   $V_n=B_{\delta_n}(\zeta)\times\Omega$, where
   $\delta_n\to0$, as $n\to\infty$.
 \end{maincorollary}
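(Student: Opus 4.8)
The plan is to deduce the statement from Theorem~\ref{mthm:random-EVL-finite} by feeding its conclusion into the equivalence between Extreme Value Laws and Hitting/Return Time Statistics of \cite{FFT10,FFT11}, transported to the random skew--product setting in \cite{AFV15}. Concretely, fix $\zeta\in\M$ and take the observable $\varphi\colon\M\to\R\cup\{+\infty\}$ of the form \eqref{eq:observation}, with $g$ a strictly decreasing bijection near $0$ of Gumbel type, say $g(y)=-\log y$. Then $\varphi$ attains its global maximum $u_F=\varphi(\zeta)=+\infty$ at $\zeta$, the process $X_n=\varphi\circ f^n_{\o}=\varphi\circ\pi\circ S^n$ is exactly the one considered in Theorem~\ref{mthm:random-EVL-finite}, and for $u$ close to $u_F$ the super--level set $U(u)=\{\varphi>u\}$ is the metric ball $B_{\delta(u)}(\zeta)$ with $\delta(u)\to0$ as $u\to u_F$; in the notation of Definition~\ref{def:HTS/RTS} this gives $V_n=U_n=B_{\delta_n}(\zeta)\times\Omega$ with $\p(V_n)=\mu(B_{\delta_n}(\zeta))$.

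The second ingredient is the regularity condition~\ref{item:U-ball}. The stationary measure $\mu$ occurring in Theorem~\ref{mthm:random-EVL-finite} is absolutely continuous with respect to $\l$ with density bounded above and below --- immediate from \eqref{eq:noise-distribution} (or, in the Markov chain case, from the hypotheses of Theorem~\ref{mthm:random-pert-decay}), since any stationary measure satisfies $\mu=(\int q_x\,\dif\mu(x))\,\l$. Hence $r\mapsto\mu(B_r(\zeta))$ is continuous and strictly increasing for small $r$, so $U(u)$ is genuinely a ball for $u$ near $u_F$ and $u\mapsto\p(U(u))=\mu(B_{\delta(u)}(\zeta))$ varies continuously and decreases to $0$; this is \ref{item:U-ball}. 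Consequently, for each $\tau>0$ we may choose levels $u_n=u_n(\tau)$ with $n\,\p(X_0>u_n)\to\tau$, that is, $n\,\mu(B_{\delta_n}(\zeta))\to\tau$.

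With these preparations, Theorem~\ref{mthm:random-EVL-finite} provides an EVL for $M_n$ with $\bar H(\tau)=\e^{-\tau}$. Applying \cite[Theorems~1 and~2]{FFT11} in the skew--product form of \cite{AFV15} --- the deterministic map replaced by $S$, the observable by $\varphi\circ\pi$ --- converts this EVL into HTS $G=H$ for $V_n=B_{\delta_n}(\zeta)\times\Omega$: the translation rests on the elementary identity \eqref{eq:rel-Mn-r}, namely $S^{-1}(\{M_n\le u_n\})=\{r_{V_n}>n\}$, together with the matching of the normalisations $n\sim\tau/\p(V_n)$ and $t/\p(V_n)$ at $t=\tau$. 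Thus \eqref{eq:def-HTS-law} holds with $G(t)=1-\e^{-t}$. Finally, the Main Theorem of \cite{HLV05} yields RTS $\tilde G$ related to $G$ through \eqref{eq:HTS-RTS}; since $\frac{\dif}{\dif t}(1-\e^{-t})=\e^{-t}=1-(1-\e^{-t})$, the function $\tilde G(t)=1-\e^{-t}$ solves $G(t)=\int_0^t(1-\tilde G(s))\,\dif s$, so \eqref{eq:def-RTS-law} holds with $\tilde G(t)=1-\e^{-t}$. This proves the corollary.

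I do not expect a serious obstacle: the substantive work has already been done inside Theorem~\ref{mthm:random-EVL-finite}, where $D_2(u_n)$ and $D'(u_n)$ are verified from the exponential decay of correlations, and what remains here is bookkeeping within an existing framework. The one point calling for a little care is the regularity input of the second paragraph --- that the stationary measure be absolutely continuous with density bounded away from $0$ and $\infty$ --- since this is what makes $\mu(U_n)$ decrease continuously to $0$ and identifies $L^1(\l)$--decay of correlations with $L^1(\mu)$--decay; but this is precisely the property of the stationary measure supplied by the Harris--chain construction underlying Theorems~\ref{mthm:trans->DC}--\ref{mthm:random-pert-decay}. I would also stress that no special property of $\zeta$ is needed: the randomness washes out the periodicity obstruction that in the deterministic setting can force a non--standard law, which is exactly why the standard exponential law holds for \emph{every} $\zeta\in\M$.
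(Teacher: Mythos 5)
Your proposal is correct and follows essentially the same route as the paper, which treats this corollary as an immediate consequence of Theorem~\ref{mthm:random-EVL-finite} combined with the EVL--HTS equivalence of \cite[Theorems 1 and 2]{FFT11} applied to the skew product $S$ with observable $\varphi\circ\pi$ as in \cite{AFV15}, plus the HTS--RTS duality \eqref{eq:HTS-RTS} from \cite{HLV05}. Your extra care in verifying \ref{item:U-ball} from the two-sided density bounds on the stationary measure (established in Subsection~\ref{sec:stationary-density}) is a sensible filling-in of a step the paper leaves implicit.
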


%

\subsection{Rare Event Point Processes}
\label{sec:rare-event-point}

If we consider multiple exceedances we are lead to point
processes of rare events counting the number of exceedances
in a certain time frame. For every $A\subset\R$ we define
\[
\nn_u(A):=\sum_{i\in A\cap\N_0}\I_{\{X_i>u\}}.
\]
In the particular case where $A=I=[a,b)$ we simply write 
$\nn_{u,a}^b:=\nn_u([a,b)).$ 
Observe that $\nn_{u,0}^n$ counts the number of exceedances amongst the first $n$ observations of the process $X_0,X_1,\ldots,X_n$ or, in other words, the number of entrances in $U(u)$ up to time $n$. Also, note that
\begin{equation}
\label{eq:rel-HTS-EVL-pp}
\{\nn_{u,0}^n=0\}=\{M_n\leq u\}.
\end{equation}
In order to define a point process that captures the essence
of an EVL and HTS through \eqref{eq:rel-HTS-EVL-pp}, we need
to re-scale time using the factor $v:=1/\p(X>u)$ given by
Kac's Theorem. However, before we give the definition, we
need some formalism. 

Let $\G$ denote the semi-ring of subsets of $\R_0^+$ whose
elements are intervals of the type $[a,b)$, for
$a,b\in\R_0^+$. Let $\RR$ denote the ring generated by
$\G$. Recall that for every $J\in\RR$ there are $k\in\N$ and
$k$ intervals $I_1,\ldots,I_k\in\G$ such that
$J=\cup_{i=1}^k I_j$. In order to fix notation, let
$a_j,b_j\in\R_0^+$ be such that $I_j=[a_j,b_j)\in\G$. For
$I=[a,b)\in\G$ and $\alpha\in \R$, we denote
$\alpha I:=[\alpha a,\alpha b)$ and
$I+\alpha:=[a+\alpha,b+\alpha)$. Similarly, for $J\in\RR$
define $\alpha J:=\alpha I_1\cup\cdots\cup \alpha I_k$ and
$J+\alpha:=(I_1+\alpha)\cup\cdots\cup (I_k+\alpha)$.

\begin{definition}
  We define the \emph{rare event point process} (REPP) by
  counting the number of exceedances (or hits to $U(u_n)$)
  during the (re-scaled) time period $v_nJ\in\RR$, where
  $J\in\RR$. To be more precise, for every $J\in\RR$, set
\begin{equation}
\label{eq:def-REPP}
N_n(J):=\nn_{u_n}(v_nJ)=\sum_{j\in v_nJ\cap\N_0}\I_{\{X_j>u_n\}}.
\end{equation}
\end{definition}

When $D'(u_n)$ holds then, since there is no clustering, due
to a criterion proposed by Kallenberg
\cite[Theorem~4.7]{Ki86} which applies only to simple point
processes (without multiple events), we can adjust condition
$D_2(u_n)$ to this scenario of multiple exceedances in order
to prove that the REPP converges in distribution to a
standard Poisson process. We denote this adapted condition
by:
\begin{condition}[$D_3(u_n)$]\label{cond:D^*} Let $A\in\RR$ and $t\in\N$.
  We say that $D_3(u_n)$ holds for the sequence
  $X_0,X_1,\ldots$ if
\[ \left|\p\left(\{X_0>u_n\}\cap
  \{\nn(A+t)=0\}\right)-\p(\{X_0>u_n\})
  \p(\nn(A)=0)\right|\leq \gamma(n,t),
\]
where $\gamma(n,t)$ is nonincreasing in $t$ for each $n$ and
$n\gamma(n,t_n)\to0$ as $n\rightarrow\infty$ for some sequence
$t_n=o(n)$, which means that $t_n/n\to0$ as $n\to \infty$.
\end{condition}
Condition $D_3(u_n)$ follows, as easily as $D_2(u_n)$, from
sufficiently fast decay of correlations.

In \cite[Theorem~5]{FFT10} a strengthening of
\cite[Theorem~1]{FF08a} is proved, which essentially says
that, under $D_3(u_n)$ and $D'(u_n)$, the REPP $N_n$ defined
in \eqref{eq:def-REPP} converges in distribution to a
standard Poisson process.

Since, under the same assumptions of
Theorem~\ref{mthm:random-EVL-finite}, condition $D_3(u_n)$
holds trivially, then applying \cite[Theorem~5]{FFT10} we
obtain

 \begin{maincorollary}
   \label{cor:random-EVL=>Poisson} 
   Under the same hypothesis of
   Theorem~\ref{mthm:random-EVL-finite}, the stochastic
   process $X_0, X_1,\ldots$ satisfies $D_3(u_n)$ and
   $D'(u_n)$, which implies that the REPP $N_n$ defined in
   \eqref{eq:def-REPP} is such that $N_n\xrightarrow[]{d}N$,
   as $n\rightarrow\infty$, where $N$ denotes a Poisson
   Process with intensity $1$.
 \end{maincorollary}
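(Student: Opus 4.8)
The plan is to reduce everything to the already-assembled machinery: Theorem~\ref{mthm:random-EVL-finite} gives us that $X_0,X_1,\ldots$ satisfies $D_2(u_n)$ and $D'(u_n)$, and the underlying random system (viewed as the deterministic skew product $(\M\times\Omega,\mathcal B,\mu\times\theta^\N,S)$) has exponential decay of correlations against $L^1$ observables by Theorem~\ref{mthm:trans->DC} (respectively Theorem~\ref{mthm:random-pert-decay}). So the work splits into two independent pieces: first, verifying condition $D_3(u_n)$; second, invoking \cite[Theorem~5]{FFT10} to pass from the conditions to Poisson convergence of the REPP.

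For the first piece, I would argue exactly as condition $D_2(u_n)$ is obtained in \cite[Section~2]{FF08a}. The point is that for a set $A\in\RR$ the event $\{\nn(A+t)=0\}$ is, up to time rescaling by $v_n$, a finite intersection of events of the form $\{X_j\le u_n\}$, hence its indicator can be written as a bounded measurable function of the orbit evaluated at times $\ge t$. Concretely, $\I_{\{X_0>u_n\}}$ depends only on the first coordinate through $\varphi\circ\pi$, while $\I_{\{\nn(A+t)=0\}}\circ S^{\,0}$ is a bounded function composed with $S^{t}$; writing both as observables and applying the decay-of-correlations estimate with $\phi=\I_{U_n}\in L^\infty\subset \mathcal C_1$ and $\psi = \I_{\{\nn(A)=0\}}$, the exponential rate $Ce^{-\lambda t}$ furnishes the bound $\gamma(n,t)=Ce^{-\lambda t}$, which is nonincreasing in $t$ and satisfies $n\gamma(n,t_n)\to 0$ for any $t_n$ with $\log n = o(t_n)$ and $t_n=o(n)$ (e.g.\ $t_n = \lceil (\log n)^2\rceil$). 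One subtlety: the target set $U_n$ here is a ball around $\zeta$ pulled back to $\M\times\Omega$, so $\I_{U_n}$ is bounded but its $L^1(\mu)$–norm is $\mu(U_n)\to 0$; since decay of correlations against $L^1$ observables contains a factor $\|\psi\|_1$ on the side of the fast-varying observable and the indicator on the other side is in $\mathcal C_1$ with bounded norm, the estimate is uniform in $n$ in the required way. This is essentially the same computation already used for $D_2(u_n)$ in the proof of Theorem~\ref{mthm:random-EVL-finite}, only with the single future event $\{\max_{t\le j<t+\ell}X_j\le u_n\}$ replaced by the multi-block event $\{\nn(A+t)=0\}$, which is still a finite union of blocks and hence still a bounded observable; so nothing new is needed beyond bookkeeping.

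For the second piece, since $D'(u_n)$ is already in hand from Theorem~\ref{mthm:random-EVL-finite}, and since $D_3(u_n)$ plays for the REPP the exact role that $D_2(u_n)$ plays for $M_n$ (it is designed so that Kallenberg's criterion \cite[Theorem~4.7]{Ki86} for convergence of simple point processes applies — $D'(u_n)$ guarantees no clustering so the limit is simple), Theorem~5 of \cite{FFT10} applies verbatim and yields $N_n \xrightarrow{d} N$ where $N$ is the Poisson process of intensity $1$. The only point requiring care is that \cite{FFT10} is phrased for a deterministic system $f^n$, whereas here the process comes from the skew product; but this is precisely the reduction already made in \cite{FFT11} and \cite{AFV15} and recalled in the excerpt — one takes the observable $\varphi\circ\pi$ on $\M\times\Omega$ and the target sets $V_n=B_{\delta_n}(\zeta)\times\Omega$, so the random system is literally a deterministic system for the purposes of applying \cite[Theorem~5]{FFT10}.

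The main obstacle, such as it is, is purely notational rather than conceptual: carefully writing the multi-block event $\{\nn(A+t)=0\}$ as an $L^\infty$ observable composed with an iterate of $S$ so that the single decay-of-correlations inequality (\ref{eq:cor-random}) can be applied with the correct normalisations, and then checking that the resulting $\gamma(n,t)=Ce^{-\lambda t}$ meets the quantifiers in Condition~$D_3(u_n)$ for a suitable $t_n=o(n)$. Since $A\in\RR$ is a fixed element of the ring (not growing with $n$), the number of blocks is bounded and this causes no difficulty; the exponential rate absorbs the polynomial factor $n$ with room to spare.
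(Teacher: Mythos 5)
Your proposal is correct and follows essentially the same route as the paper: $D_3(u_n)$ is deduced from the exponential decay of correlations exactly as $D_2(u_n)$ was, taking $\phi=\I_{U_n}$ and, for $\psi$, the noise-average $\int \I_Z\,\dif\theta^{\N}(\tilde{\o})$ of the multi-block event $Z=\bigcap_{i\in A\cap\N}\{x: f^i_{\tilde{\o}}(x)\le u_n\}$ (so that $\psi$ is a function on $\M$ alone, the only bookkeeping point you gloss over), which gives $\gamma(n,t)=C^*\lambda^{-t}$ and hence $n\gamma(n,t_n)\to 0$ for any $t_n$ with $\log n = o(t_n)$, $t_n=o(n)$. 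With $D'(u_n)$ already supplied by Theorem~\ref{mthm:random-EVL-finite}, the conclusion via \cite[Theorem~5]{FFT10} is exactly as in the paper.
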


\subsection{Organization of the work}
\label{sec:organization-work}

We present some examples of applications focusing on
specific families for which our results directly improve
recent advances, in Section~\ref{sec:applications}.

In Section~\ref{sec:Harris->DC} we explain how our setting
fits into a Markov Chain satisfying Harris and Doeblin
conditions, enabling us to obtain exponential decay of correlations from
already known results.
Then, in Section~\ref{sec:proof-main-results}, we prove
the main results on extreme value laws, hitting/return time statistics
and rare event point processes.

\subsection*{Acknowledgements}
HA would like to thank Tertuliano Franco (UFBA) for very
useful discussions concerning Harris chains. HA was
supported partly by CNPq (Brazil) grant number
162724/2013-6, and by Programa Nacional de P\'os-Doutorado
(PNPD), CAPES (Brazil). VA was partially supported by CAPES,
CNPq (Project 301392/2015-3) and FAPESB (Brazil).

\section{Applications}
\label{sec:applications}

Here we give some examples for which we can apply our
results. The assumptions are rather weak: any transitive map
(that is, admitting a subset of points with dense forward
orbit) of a compact manifold can be randomly perturbed in
our setting.

In what follows we focus on specific families for which our
results directly improve recent advances.

\subsection{Lorenz-like maps}\label{ex:ex-1-1}

Lorenz maps are the one-dimensional maps associated to the geometric
Lorenz models, which were constructed as an
attempt to understand the numerically observed behaviour of
the Lorenz attractor introduced by Lorenz in
\cite{Lo63}. The Lorenz equations
\begin{equation}\label{eq:Lorenz}
\dot x=a(y-x),\, \dot y=(r-z)x-y, \, \dot z=xy-bz,
\end{equation}
with the parameters $a=10,\, r=28/3 \textrm { and } b=8/3$
were intended as an extremely simplified model for thermal
fluid convection, in order to understand the atmospherical
circulation. Numerical simulations for an open neighbourhood
of these values of the parameters pointed to the existence
of a strange attractor, but this non-linear system of
differential equations poses both numerical and analytical
challenges to its understanding. Ten years after the
introduction of this system, the so-called \textit{geometric
  Lorenz models} were constructed as an attempt to
rigorously understand the phenomena observed by Lorenz. They
were proposed by Afraimovich, Bykov, Shil'nikov in
\cite{ABS77} and Guckenheimer, Williams in \cite{GW79},
independently. These models are three-dimensional flows for
which it is possible to prove the existence of a strange
attractor with regular solutions accumulating a singular (or
an equilibrium) point. Moreover, this attractor is sensitive
to initial conditions and can not be destroyed by small
perturbations of the original flow, that is to say it is
robust. Finally, Tucker \cite{Tu99,Tu02} proved the existence and
robustness of the Lorenz attractor and, as a consequence of
the method of his proof, showed that these models do
describe the behaviour of \eqref{eq:Lorenz}. For more
information on the history of the subject and the
construction of the geometric models, we refer the reader to
Araujo, Pacifico and Viana \cite{AraPac2010,Vi00} and references
therein.

Basically, the study of the geometric Lorenz flows is done
through the reduction to a Poincar\'e first return map to a
global singular two-dimensional cross-section, which is then
further reduced to the study of a one-dimensional
transformation. This one-dimensional transformation is
obtained by quotienting the return map over an invariant
contracting foliation by curves which partition the
cross-section. The map $f$ satisfies (see
Figure~\ref{fig:Lorenz1D})
\begin{enumerate}
\item $f$ is discontinuous at $x = 0$ with $\lim_{x\to 0^{-}} f(x) = +1$ and $\lim_{x\to 0^{+}} f(x) = -1$;
\item $f$ is differentiable on $[-1/2,0)\cup (0,1/2]$,
  $f'(x) > \sqrt{2}$ and
  $\lim_{x\to0^{\pm}}\frac{f'(x)}{x^\beta}=\alpha_\pm$,
  where $0<\beta<1$;
\item $f$ is topologically exact and thus transitive.
\end{enumerate}

\begin{figure}[tb]
 \begin{center}
    \includegraphics[width=14cm]{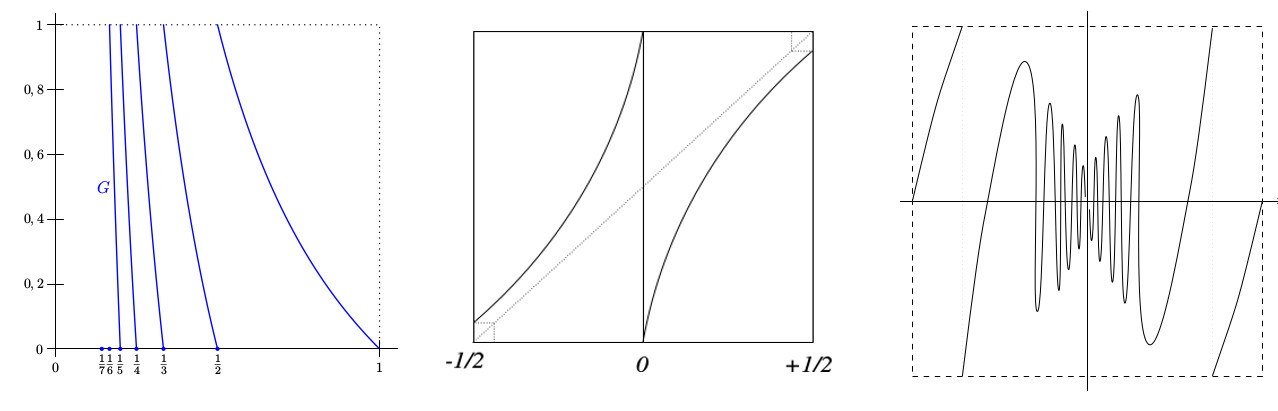}
 \end{center}
 \caption{\label{fig:Lorenz1D}
From left to right: Gauss map, Lorenz map, infinite modal map.}
\end{figure}

In \cite{GHN11}, Gupta et al.\ established exponential
limiting laws for the extremal study of Lorenz-like maps in
the deterministic setting. Later, in \cite{FFLTV13}, Faranda
et al.\ gave numerical results for additive random
perturbations of a family of Lorenz maps, pointing to the
convergence of extreme values to the classical EVL
distributions for increasing values of the
noise. 
Here, we give an analytic solution for an arbitrary noise
level as a result of
Theorems~\ref{mthm:trans->DC},~\ref{mthm:random-EVL-finite}
and Corollaries~\ref{cor:random-EVL=>HTS},
~\ref{cor:random-EVL=>Poisson}.


\begin{maincorollary}
\label{cor:Lorenz-map}
Let $f:{\mathcal S}^1\to {\mathcal S}^1$ be a map satisfying
conditions (1)-(3) listed above, which is randomly perturbed
as in \eqref{eq:random-perturbation} with noise distribution
given by \eqref{eq:noise-distribution}. For any point
$\zeta\in\M$, consider that $X_0, X_1,\ldots$ is defined as
in \eqref{eq:def-rand-stat-stoch-proc-RDS2} and let $u_n$ be
such that \eqref{eq:un} holds. Then the stochastic process
$X_0, X_1,\ldots$ satisfies $D_2(u_n)$, $D_3(u_n)$ and
$D'(u_n)$, which implies that we have an EVL for $M_n$ such
that $\bar H(\tau)=\e^{-\tau}$ and we have exponential
HTS/RTS for balls around $\zeta$. Moreover, the REPP $N_n$
defined in \eqref{eq:def-REPP} is such that
$N_n\xrightarrow[]{d}N$, as $n\rightarrow\infty$, where $N$
denotes a Poisson Process with intensity $1$.
\end{maincorollary}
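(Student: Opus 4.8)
The plan is to verify that a Lorenz-like map $f$ obeying (1)--(3), under the prescribed random perturbation, satisfies all hypotheses of Theorems~\ref{mthm:trans->DC} and~\ref{mthm:random-EVL-finite}, after which every assertion in the corollary follows at once from those two theorems together with Corollaries~\ref{cor:random-EVL=>HTS} and~\ref{cor:random-EVL=>Poisson}.

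The first step is to pin down the regularity of $f$ and to locate a suitable dense orbit. By~(1) the only discontinuity of $f$ lies at $x=0$, so $f$ is continuous on the full $\l$-measure subset $Y\coloneqq{\mathcal S}^1\setminus\{0\}$ and, in particular, $\l$-a.e.\ continuous. By~(2), $f$ has exactly two monotone branches, hence $f^{-n}(\{0\})$ is finite for each $n$, so $E\coloneqq\bigcup_{n\ge0}f^{-n}(\{0\})$ is countable and, likewise, the set $\bigcup_{m\ge0}f^{-m}(E)$ of points whose forward orbit ever hits $0$ is countable. By~(3), $f$ is topologically exact, hence transitive on the compact space ${\mathcal S}^1$ without isolated points, so a standard argument (either Baire category applied to $\bigcap_k\bigcup_{n\ge0}f^{-n}(U_k)$ for a countable basis $\{U_k\}$, or a direct inductive construction successively choosing subintervals mapped by some iterate onto prescribed basic open sets) provides a point $x_0$ with dense forward orbit; moreover, since the bad set above is countable, one can require $\{f^nx_0:n\ge0\}$ to avoid $0$, hence to be contained in $Y$, i.e.\ to consist of continuity points of $f$. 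I expect this --- keeping the dense orbit inside the continuity set, which is the only place where hypotheses~(1)--(2) are used beyond plain transitivity --- to be the single delicate point of the proof.

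It then remains to fit the random perturbation into the framework and invoke the main results. Since ${\mathcal S}^1$ is parallelizable, the additive random perturbation scheme of Section~\ref{sec:additive-random-pert} applies to $f$ (that construction needs only measurability of $f$), so for every small enough noise level there is a pair $(f_\omega,\theta)$ satisfying \eqref{eq:random-perturbation} and \eqref{eq:noise-distribution}, which is exactly the perturbation hypothesis of Theorems~\ref{mthm:trans->DC} and~\ref{mthm:random-EVL-finite}. Theorem~\ref{mthm:trans->DC} then gives a stationary measure with exponential decay of correlations against $L^1$ observables. Applying Theorem~\ref{mthm:random-EVL-finite} to the process $X_0,X_1,\ldots$ of \eqref{eq:def-rand-stat-stoch-proc-RDS2} with the chosen point $\zeta$ and $u_n$ as in \eqref{eq:un} yields conditions $D_2(u_n)$ and $D'(u_n)$ and the EVL with $\bar H(\tau)=\e^{-\tau}$; Corollary~\ref{cor:random-EVL=>HTS} upgrades this to exponential HTS/RTS for balls around $\zeta$; and Corollary~\ref{cor:random-EVL=>Poisson} provides condition $D_3(u_n)$ together with $N_n\xrightarrow[]{d}N$ for the REPP of \eqref{eq:def-REPP}, where $N$ is the Poisson process of intensity $1$. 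This exhausts the statement.
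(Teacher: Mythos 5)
Your proposal is correct and follows the same route as the paper: the corollary is obtained by verifying the hypotheses of Theorems~\ref{mthm:trans->DC} and~\ref{mthm:random-EVL-finite} and then citing them together with Corollaries~\ref{cor:random-EVL=>HTS} and~\ref{cor:random-EVL=>Poisson}. You are in fact slightly more careful than the paper on the one genuinely delicate point — producing a dense forward orbit that avoids the countable set $\bigcup_{n\ge0}f^{-n}(\{0\})$ so that it lies in the continuity set $Y$ — and your Baire/inductive argument for this is sound.
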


\subsection{Countable branch case} 
\label{sec:countable-branch-cas}

We can also apply our results to full branch Markov maps
with countable number of branches, like the Gauss map or
maps in the setting of Rychlik's Theorem \cite{Ry83}, as
studied in \cite[Section 3.2.1]{AFV15} in the deterministic
setting, since these classes of maps are transitive. Thus
our results about EVL, HTS/RTS and REPP also hold for this
class of systems.

\subsection{Smooth interval maps}
\label{sec:benedicks-carles-map}

In \cite{BC85,BC91} Benedicks and Carleson proved the
existence of a positive Lebesgue measure subset of
parameters $P\subset[1,2]$ of the family of quadratic maps
$x\mapsto f_a(x)=a-x^2$ for which there exists an absolutely
continuous $f_a$-invariant probability measure (acim)
$\mu_a$ and $f_a$ is topologically mixing on the support
$I_a=[f_a(0),f_a^2(0)]$ of $\mu_a$ for $a\in P$. In
particular, these maps admit a dense forward orbit on the
interval $I_a$.

In \cite{FF08}, Freitas and Freitas  studied the
extremal behaviour of Benedicks-Carleson (BC) maps in
deterministic case and they got a standard exponential
extreme value law. Later, in \cite[Section 4.3]{FFLTV13},
the authors numerically studied the additive random
perturbations of quadratic maps, which includes BC maps, and
concluded that under suitable normalisations one should get
standard exponential laws as well.

In fact, Lyubich~\cite{Ly2002} shows that almost all
quadratic maps either admit an ergodic absolutely continuous
invariant probability measure, where we can apply our
results, or there exists a periodic sink whose basin covers
the interval except a zero Lebesgue measure subset. This is
typical of unimodal families \cite{AvMor05}. In particular
no requirements on decay of correlations are needed and so
our results can also be applied to multimodal maps
exhibiting an absolutely continuous invariant probability
measure.

Applying our results, we can get the standard exponential
limiting laws under random perturbations
analytically. Moreover, we get results for HTS/RTS and REPP,
as expected from the previous numerical studies. 

\subsection{Infinite modal maps}
\label{sec:infinite-modal-maps}

In \cite{PRV98,ArPa04} certain parametrized families of
one-dimensional maps with infinitely many critical points
were analyzed from the measure-theoretical point of view;
see Figure~\ref{fig:Lorenz1D}. 

It was proved that such families admit absolutely continuous
invariant probability measures and are topologically mixing
(in particular topologically transitive and so admit a dense
orbit) for a positive Lebesgue measure subset of
parameters. Moreover, the densities of these measures vary
continuously with the parameter and each measure exhibits
exponential rate of mixing for H\"older observables.

We can apply our results to each map in these families
obtaining EVL, HTS/RTS and REPP in the setting of the main
results stated.

\subsection{Piecewise expanding maps in higher dimensions}
\label{sec:piecew-expand-maps}

Again, we can apply all our main results on EVL, HTS/RTS and
REPP to piecewise expanding maps in higher dimensions in the
setting of \cite{Sa00}, as studied in \cite{AFV15} in the
deterministic setting.


\section{Markov chains and decay against $L^1$ for random
  perturbations}
\label{sec:Harris->DC}

In this section we show that random perturbations, as defined
in Section~\ref{sec:statement-results}, of a transitive
dynamical system define a Harris chain which also satisfies
Doeblin's Condition, from which we get fast decay of
correlations against all $L^1$ observables as a consequence
of exponentially fast convergence to the equilibrium or
stationary distribution.

We have already seen in
Subsection~\ref{sec:repres-markov-chains} that our random
perturbations are a particular case of a Markov Chain with
transition probabilities given by
\eqref{eq:transition-prob}, with transition densities
$(q_x)_{x\in\M}$ and stationary measure $\P=\mu$.

\subsection{Harris and Doeblin conditions}
\label{sec:harris-doeblin-condi}

Roughly speaking, a Harris chain is a Markov chain that
returns to a particular part of the state space an unbounded
number of times with positive probability. For the precise
definition we follow Durrett \cite{Du10}.

We denote the $n$th-step transition probability by
\begin{align*}
  p_x^nA=[(f^x)^n_*\theta^n](A)=\int 1_A\big(f^n_\omega
  x)\,\dif\theta^n(\omega)= \int_Aq_x^n\,\dif\l, \quad n\ge1.
\end{align*}

\begin{definition}[Harris chain]
\label{def:Harris-chain}
A Markov chain $\Phi_n$ is a \textit{Harris chain} if one
can find measurable sets $A,B\in\B$, a function $g$ and a
constant $\xi>0$ with $g(x,y)\geq\xi$ for $x\in A,\, y\in B$
and a probability measure $m$ concentrated on $B$ such that
\begin{itemize}
\item[i)] $p_z(\{x: \tau_A(x)<\infty\})>0$ for
  $\l$-a.e. $z$, where
  $\tau_A(x)= \inf\{n\geq 0: p^n_xA>0\}$;
\item[ii)] $x\in A$,
  $C\subset B \implies p_xC\geq \int_C g(x,y)\,\dif m(y)$.
\end{itemize}
\end{definition}



A probability measure $\pi$ on $\M$ with
$\int p_xA\cdot\pi(dx)=\pi(A)$, for all $A\in\B$, is a
\emph{stationary distribution} for a Markov chain. As proved
in \cite{H56}, there exists a unique stationary distribution
for Harris chains. In our setting
$\pi=\p=\mu\times\theta^\N$.

\begin{definition}[Aperiodicity]
\label{def:aperiodicity}
A Markov chain is \emph{aperiodic} if there is no partition
$\M=\sqcup_{i=1}^{\ell}\M_i$ (where $\sqcup$ represents a
disjoint union) for some $\ell\geq 2$ which satisfies
\begin{align*}
  p_x\M_{i+1}=1, \quad\forall x\in\M_i,
  i=1,\ldots,\ell-1
  \qand
  p_x\M_1=1, \quad\forall x\in\M_\ell.
\end{align*}
\end{definition}

Aperiodicity ensures \emph{ergodicity} of the Markov Chain:
the state space admits no decomposition into strictly
smaller sets which are invariant under finitely many
iterates of the process.

\begin{definition}[Doeblin's Condition]
\label{def: Doeblin's condition}
There are $0<\gamma<1$, $\delta>0$, an integer $k\ge1$ and a
probability measure $m$ so that
$ m(A)>\gamma\implies p^k_xA\geq\delta $ for any measurable
set $A$ and $m$-a.e. $x$.
\end{definition}

This condition together with the previous one ensures
\emph{uniform ergodicity} of the chain, that is, any initial
probability distribution in the state space converges to the
unique stationary measure exponentially fast.

Next result gives a sufficient condition to obtain an
aperiodic Harris chain satisfying Doeblin's Condition via
random perturbation of a dynamical system.

\begin{proposition}
  \label{prop:ran-per-aper-Harris}
  Let $f$ be a map randomly perturbed according to
  \eqref{eq:random-perturbation} with noise distribution
  given by \eqref{eq:noise-distribution} and such that there
  exists a full $\l$-measure subset $Y$ and a point
  $x_0\in\M$ satisfying $\{f^nx_0: n\ge0\}$ is both dense in
  $\M$ and a subset of $Y$. Then the random perturbation
  defines an aperiodic Harris chain which satisfies
  Doeblin's Condition.
\end{proposition}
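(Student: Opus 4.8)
The plan is to verify each of the four properties — Harris recurrence conditions (i) and (ii), aperiodicity, and Doeblin's condition — directly from the covering hypothesis \eqref{eq:random-perturbation} together with the uniform density bounds \eqref{eq:noise-distribution}. The guiding intuition is that for $\l$-a.e.\ $x$, the one-step transition measure $p_x = (f^x)_*\theta$ is absolutely continuous with density bounded between $\un q$ and $\ov q$ on a set containing $B_{\rho_0}(fx)$; hence after one step the chain ``spreads out'' over a ball of definite size, and after finitely many steps — using transitivity of $f$ — it spreads over all of $\M$. I would first record the elementary consequence that for any measurable $A$ with $\l(A)>0$ and $\l$-a.e.\ $x$ with $A\cap B_{\rho_0}(fx)$ of positive measure, $p_xA\ge \un q\,\l(A\cap B_{\rho_0}(fx))>0$; iterating and using continuity of $f$ along the dense orbit $\{f^nx_0\}$, together with the fact that $f$ is continuous on the full-measure set $Y\supset\{f^nx_0\}$, one shows that for every open set $O$ there is $N=N(O)$ with $p_x^NO>0$ for $\l$-a.e.\ $x$. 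This is the technical heart and, I expect, the main obstacle: one must propagate the open covering through $\l$-a.e.\ statements, being careful that the exceptional null sets do not accumulate badly under composition, which is handled by noting that $f^x_*\theta\ll\l$ so preimages of null sets under the random maps are $\theta$-a.s.\ null.

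Granting that, condition (i) of Definition~\ref{def:Harris-chain} is immediate upon choosing $A$ to be any ball (say $A=B_{\rho_0/2}(fx_0)$, or more conveniently a small ball around a point in the dense orbit): since $\tau_A(x)=\inf\{n\ge0:p_x^nA>0\}$ and we have just shown $p_x^NA>0$ for some fixed $N$ and $\l$-a.e.\ $x$, in particular $\tau_A(x)<\infty$, so $p_z(\{x:\tau_A(x)<\infty\})=1>0$ for $\l$-a.e.\ $z$. For condition (ii), take $B=A$ a ball of radius $r<\rho_0$ centred at a suitable point $\zeta_0$ such that $B\subset B_{\rho_0}(fx)$ for all $x$ in a slightly larger ball $A$ (possible by uniform continuity of $f$ on $Y$ near a point of the dense orbit, shrinking $r$); set $g(x,y)=\un q\,\l(B)/\l(B)\cdot\mathbf 1$—more precisely, define the probability measure $m=\l(\,\cdot\cap B)/\l(B)$ concentrated on $B$ and $g(x,y)\equiv\un q\,\l(B)=:\xi>0$, so that for $x\in A$ and $C\subset B$ we get $p_xC\ge\un q\,\l(C)=\xi\,m(C)=\int_C g(x,y)\,\dif m(y)$, using that $C\subset B\subset B_{\rho_0}(fx)$ and the lower density bound.

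Aperiodicity follows by contradiction: if $\M=\sqcup_{i=1}^\ell\M_i$ were a nontrivial cyclic partition with $p_x\M_{i+1}=1$ for $x\in\M_i$ and $p_x\M_1=1$ for $x\in\M_\ell$, then iterating, $p_x^\ell\M_i=1$ for $\l$-a.e.\ $x\in\M_i$; but the spreading property gives $p_x^N O>0$ for every open $O$, and in particular by taking $O$ meeting two distinct atoms $\M_i$ we would force $p_x^{\ell}$-mass to leak outside $\M_i$ for $\l$-a.e.\ $x\in\M_i$ once $N$ exceeds $\ell$ suitably (choose a common multiple; a ball of radius $\rho_0$ around $fx$ already has positive $p_x$-measure and cannot be contained in a single atom modulo $\l$ if the atoms are $\l$-separated, because $f$-images of the dense orbit are dense). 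One must check the atoms have positive $\l$-measure, which they do since they partition $\M$; a single atom $\M_i$ would then have to be both open-ish and forward-invariant under $p_x^\ell$, contradicting that the image ball $B_{\rho_0}(fx)$ under one step already straddles atoms for a positive-$\l$-measure set of $x$. Finally, Doeblin's condition is essentially condition (ii) made uniform: with the same ball $B$, $m$ as above, and any $A$ with $m(A)>\gamma:=\tfrac12$ (so $\l(A\cap B)>\tfrac12\l(B)$), we have for $m$-a.e.\ $x$ (i.e.\ $x\in B$, and then by one more application of the spreading in $k$ steps, for $\l$-a.e.\ $x$) that $p_x^k A\ge p_x^k(A\cap B)\ge\delta$ with $\delta:=\un q\cdot\tfrac12\l(B)\cdot c$, where $c>0$ is a uniform lower bound on $p_x^{k-1}(B)$ coming from the spreading property with $k=k(B)$; choosing $k$ as the uniform covering time for the open set $B$ completes the argument. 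The only delicate points throughout are the management of $\l$-null exceptional sets under composition and the extraction of a \emph{uniform} (in $x$) lower bound on $p_x^kB$, both of which rest on the compactness of $\M$, the denseness and continuity of the orbit $\{f^nx_0\}\subset Y$, and absolute continuity of the one-step measures.
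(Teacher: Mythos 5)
Your verification of the Harris conditions (i) and (ii) matches the paper's: condition (ii) in particular is handled in essentially the same way (small reference ball $B$ inside $B_{\rho_0}(fx)$, $m=\l(\cdot\cap B)/\l(B)$, $\xi=\un{q}\,\l(B)$), and your ``spreading'' lemma is the analogue of the paper's estimate $p_z^{n+1}(A)\ge\un{q}^{\,n}\l(A\cap B_\delta(w))>0$ for $\l$-a.e.\ $z$ and some $n=n(z)\le 2N$, obtained from the $\rho_0/4$-dense finite orbit segment. However, two of your four verifications have genuine gaps. For aperiodicity, the whole argument rests on the unproved assertion that the one-step image ball $B_{\rho_0}(fx)$ ``straddles atoms for a positive-$\l$-measure set of $x$''. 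For an individual $x$ a $\rho_0$-ball can perfectly well lie inside a single atom mod $\l$, and density of the orbit does not by itself produce a straddling ball. What is needed --- and what the paper supplies --- is the \emph{connectedness} of $\M$: passing to the sets $\tilde\M_i$ of Lebesgue density points, the closures $\ov{\tilde\M_i}$ cover $\M$ and cannot be pairwise disjoint (else $\M$ would be disconnected), so some point lies in $\ov{\tilde\M_i}\cap\ov{\tilde\M_j}$ with $i\ne j$; a $\rho_0/2$-ball around that point then meets both atoms in positive measure, and placing it at $f(f^nx_0)$ for suitable $n$ yields a $y$ with $0<p_y\M_i<1$, contradicting $p_y\M_{i+1}=1$. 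Your proposal never invokes connectedness, and the side remark that the atoms automatically have positive measure ``since they partition $\M$'' is also false as stated (the paper disposes of null atoms by a separate WLOG).

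For Doeblin's condition, your construction takes $m$ concentrated on the small ball $B$ and a $k$-step bound $p_x^k(A\cap B)\ge \un{q}\cdot\tfrac12\l(B)\cdot c$, where $c$ is a \emph{uniform} lower bound on the $(k-1)$-step probability of landing where $B\subset B_{\rho_0}(fy)$. You flag this uniform bound as ``delicate'' but do not prove it, and it is in fact the hard part: the spreading estimate gives a step count $n(z)$ that varies (only locally constantly) with $z$, so extracting a single $k$ and a uniform $c$ requires an additional compactness/covering argument that is absent. The paper avoids this entirely with a one-step argument: choose $\gamma$ so that every $\rho_0$-ball has $\l$-measure at least $\gamma$ (compactness of $\M$); then any measurable $A$ whose complement has $\l$-measure at most $\gamma/2$ must satisfy $\l(A\cap B_{\rho_0}(fx))\ge\gamma/2$, whence $p_xA\ge\un{q}\gamma/2$ for $\l$-a.e.\ $x$, giving Doeblin with $k=1$ and no recourse to multi-step spreading. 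I recommend replacing your aperiodicity and Doeblin arguments by these; the rest of your plan is sound.
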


\begin{proof}
  For the Harris conditions we need to find the sets
  $A,\, B$ as in Definition \ref{def:Harris-chain}.  Fix any
  positive Lebesgue measure Borel set $A$.  For condition
  (i) of Definition~\ref{def:Harris-chain}, we show that
  $\theta^\N\{\o: \exists n\ge1$ s.t.
  $f^n_{\o}(z)\in A\}>0$ for $\l$ a.e. $z\in\M$.

  By assumption on $f$, there exists a point $x_0\in Y$ so
  that $\overline{\{f^nx_0:n\geq 1\}}=\M$. Hence we can find
  $N\in\N$ so that $\{f^nx_0:0\le n\le N\}$ is
  $\frac{\rho_0}4$-dense and $\{f^nx_0: k\le n\le N+k\}$ is
  also $\frac{\rho_0}4$-dense for all $k=1,\dots,N$.

  Let $w\in A$ be a Lebesgue density point of $A$. Then, for
  $\l$-a.e.  $z\in\M$, we can find
  $x_1\in B_{\rho_0}(fz)\subset f^z\big(\supp(\theta)\big)$
  so that $x_1=f^kx_0$ for some $0\le k\le N$; and so there
  exists $k\le n \le N+k$ such that
  $f^nx_1\in B_{\rho_0/4}(w)$. In particular,
  $\l\big(B_{\rho_0}(f^nx_1)\cap A\big)>0$.

  Then $\dist(w,f^nx_1)<\frac{\rho_0}4$ and
  $\dist(fz,x_1)<\rho_0$. Since $x_1\in Y$ we have that
  $q_{f^kx_1}\ge\un{q}$ in a $\rho_0$-neighborhood of
  $f^{k+1}x_1$ for $k=0,\dots,n-1$. Consequently, by
  definition of $(q_x)_{x\in\M}$, for all small enough
  $\delta>0$
  \begin{align*}
    [(f^z)^{n+1}]_*\theta^\N\big(B_\delta(w)\big)
    &=
      \int \dif\theta^\N(\o) 1_{B_\delta(w)}\circ
      f^{n+1}_{\o}(z)
    \\
    &=
      \int \dif z_1\cdots \dif z_n \,
      q_z(z_1)q_{z_1}(z_2) q_{z_2}(z_3)\cdots
      q_{z_{n-1}}(z_n) 1_{B_\delta(w)}(z_n)
    \\
    &\ge
      \un{q}^{n}\cdot\l(B_\delta(w))>0
  \end{align*}
  In particular, we get
  $[(f^z)^{n+1}]_*\theta^\N\big(A\big)>0$ by the choice of
  $w$. This also shows that
  \begin{align}\label{eq:strongrecurrent}
        \l(A)>0\implies \tau_A(x)\le 2N, \quad\l-\text{a.e.}\, x\in\M
  \end{align}
  and proves the following stronger statement than item (i)
  of Definition~\ref{def:Harris-chain}
  \begin{align}\label{eq:strongHarris}
    \l(A)>0\implies\text{for $\l$-a.e. }z\in\M\, 
    \exists 0\le n(z)\le 2N:
    p_z^{n+1}(A)>\un{q}^n\l(A\cap B_\delta(w))
  \end{align}
  for every Lebesgue density point $w$ of $A$ and every
  small enough $\delta>0$.

  \begin{remark}
    \label{rmk:loconst}
    The function $n=n(z)$ is locally constant for
    $\l$-a.e.\ $z$ since $x_1\in B_{\rho_0}(f\wt{z})$ for all
    $\wt{z}$ in a neighborhood of $z$, by the continuity of
    $f$ on $Y$.
  \end{remark}

  To obtain item (ii) of Definition~\ref{def:Harris-chain},
  we take $B=B_{\rho_0/2}(f(x))$, for any given fixed
  $x\in A$.  Then, for any $C\subset B$, using properties
  ~\eqref{eq:noise-distribution} we get
\begin{align*}
p_xC=p(x, C)
&= 
[(f^x)_*\theta](C)
=
\int_C q_x\,\dif\l 
\geq
\un{q}\cdot \l(C)
=
\un{q}\cdot \l(B)\cdot \frac{\l{(C\cap B)}}{\l(B)}
\end{align*}
and so taking $\xi=\un{q}\cdot\l(B)$,
$m(C):=\frac{\l(C\cap B)}{\l(B)}$ for any Borel set
$C\subset B$ and $g(x,y):=q_x(y)$ we are done.

Aperiodicity is a consequence of properties
\eqref{eq:noise-distribution} together with the existence of
a dense unperturbed orbit. Let us
assume that there is a partition of $\M$ as in
Definition~\ref{def:aperiodicity}. We can assume without
loss of generality that $\l(\M_i)>0$ for all
$i=1,\dots,\ell$. Let $\tilde\M_i$ denote the subset of
Lebesgue density points of $\M_i$ so that
$\l(\M_i\setminus\tilde\M_i)=0$ and $i=1,\dots,\ell$ and thus
$\M=\sqcup_{i=1}^\ell \tilde\M_i,\l\bmod0$.  In particular
we obtain $\M=\cup_{i=1}^\ell\ov{\tilde\M_i}$ and this
cannot be a disjoint union, for otherwise
$\sqcup_{i=1}^{\ell-1}\ov{\tilde\M_i}=\M\setminus\ov{\tilde\M_\ell}$
and $\ov{\tilde\M_\ell}$ would be open and closed,
contradicting the connectedness of $\M$ because $\ell\ge2$.

Hence there exists $x\in\ov{\tilde\M_i}\cap\ov{\tilde\M_j}$
for some $i\neq j$ and we can find $y=f^nx_0\in\M$ for some
$n\ge1$ such that $B_{\rho_0/2}(fy)\ni x$ (recall that $x_0$
has dense positive orbit). Then we obtain
\begin{align*}
[(f^y)_{*}\theta](\tilde\M_i)
&\ge
\int_{B_{\rho_0}(fy)\cap\tilde\M_i}q_y\,\dif\l
\ge
\un{q}\cdot\l\big(B_{\rho_0}(fy)\cap\tilde\M_i\big)>0
\end{align*}
since $B_{\rho_0}(fy)\cap\tilde\M_i\neq\emptyset$, by
definition of Lebesgue density point. Analogously we get
$[(f^y)_{*}\theta](\tilde\M_j)>0$. Therefore we have found
$y\in\M$ such that $0<p_y\M_i<1$ and $0<p_y\M_j<1$. This
shows that a partition of $\M$ as in
Definition~\ref{def:aperiodicity} cannot exist.

To obtain Doeblin's Condition we use
properties~\eqref{eq:noise-distribution}. Let
$\gamma\in(0,1)$ be such that $\l(B(z,\rho_0))\ge\gamma$ for
all $x\in\M$, which exists by compactness of $\M$. Let $B$
be a Borel subset of $\M$ such that $\l(B)\ge1-\gamma/2$.
Then for any Borel subset $A$
\begin{align*}
\l(A)>\gamma
\implies
p_xB=\int_Bq_x\,\dif\l
\ge
\un{q}\l(B\cap B(fx,\rho))
\ge
\un{q}\frac{\gamma}2, \quad\l-\text{a.e. } x\in\M.
\end{align*}
Hence letting $\delta=\un{q}\gamma/2$ we have obtained
Doeblin's Condition for $k=1$.
\end{proof}


\subsection{Strictly positive stationary density}
\label{sec:stationary-density}

We note that the absolute continuity assumption on
$f^x_*\theta\ll\l$ for $\l$-a.e.\ $x\in\M$ ensures that the
unique stationary probability measure is given by a
distribution, that is, $\p=\mu\times\theta^\N$ where
$\mu=h\l$ with $h\ge0, h\in L^1(\l)$. Indeed, by definition
of stationary measure, for $\phi\in L^1(\mu)$
\begin{align*}
 \mu(\phi)
  &=
  \int \dif\mu(x) \int \dif\theta(\omega) \, \phi\circ f_\omega(x)
  =
  \int \dif\mu(x) \int \dif(f^x_*\theta) \, \phi
  =
  \int \dif\mu(x) \int \dif\l \, \phi\cdot q_x
\end{align*}
and if $\phi=1_A$ with $A\in\B$ so that $\l(A)=0$, we get
$\mu(A)=\int \dif\mu(x) \int_A \dif\l \, q_x = 0$, showing that
$\mu\ll\l$ and that $h=\frac{\dif\mu}{\dif\l}$ is as claimed.

This density is strictly positive as a consequence of
\eqref{eq:strongrecurrent} and \eqref{eq:strongHarris}
together with Remark~\ref{rmk:loconst}.
Indeed, because $\mu$ is stationary we get, fixing $z\in Y$
and  a Borel
subset $A$ such that $\l(A)>0$, the existence of $0\le n\le
2N$ satisfying
\begin{align*}
  \mu(A_\delta^w)
  &=
  \int \dif(\mu\times\theta^\N)(x,\o) 1_{A_\delta^w}\circ f^n_{\o}(x)
  =
  \int \dif\mu(x) \dif [(f^x)^n]_*\theta^\N\, 1_{A_\delta^w}
    \ge \un{q}^{n-1}\mu(B_{\bar\delta}(z))\l(A_\delta^w)
\end{align*}
where $A_\delta^w:=A\cap B_\delta(w)$, for every Lebesgue
density point $w$ of $A$ and every small enough
$\delta,\bar\delta>0$. Note that $\delta$ depends on $w$ and
$A$ and $\bar\delta$ depends on $z$, but $n=n(z)$ is
uniformly bounded from above. Since $\dif\mu=h\, \dif\l$ we obtain
for all small enough $\delta>0$
\begin{align*}
  \frac1{\l(B_\delta(w))}\int h1_{A_\delta^w}\,\dif\l
  \ge
  \un{q}^{n-1}\mu(B_{\bar\delta}(z))
  \frac{\l(A\cap B_\delta(w))}{\l(B_\delta(w))}
\end{align*}
and by the Lebesgue Differentiation Theorem and the choice
of $w$ we obtain
\begin{align*}
  h(w)\ge\mu(B_{\bar\delta}(z))\cdot\inf\{\ov{q}^k:0\le k\le2N\}=:\un{h}
\end{align*}
Hence we conclude
$h\ge\un{h}>0, \l$-a.e..


\subsection{Deduction of the main results}
\label{sec:proof-main-results}

Now we use the previous observations to complete the proofs
of the main results.

\begin{proof}[Proof of Theorems~\ref{mthm:trans->DC}
and \ref{mthm:random-pert-decay}]
Since the chain is an aperiodic Harris chain which also
satisfies Doeblin's condition (see
Proposition~\ref{prop:ran-per-aper-Harris}), we can use the
equivalence of items $(ii)$ and $(iv)$ in \cite[Theorem
16.0.2]{MT09} and conclude uniform ergodicity: there exist
$\lambda>1$ and $C<\infty$ such that for $\l$-a.e. $x\in\M$
and each $n\ge1$
\begin{equation}
\label{eq:conv-to-equi}
\|p^n_x-\mu\|\leq C \lambda^{-n},
\end{equation}
where $\|\cdot\|$ stands for the total variation norm, \ie
$\|p^n_x-\mu\|=\sup_{A\in\B}|p^n_xA-\mu(A)|$. Since all
probability measures involved here have densities,
\eqref{eq:conv-to-equi} is equivalent to 
\begin{equation}
\label{eq:tot-var-2}
\frac{1}{2}\left\|q_x^n-h\right\|_{L^1(\l)}
=
\frac{1}{2}\int\left |q_x^n-h\right|\,\dif\l
\le 
C\lambda^{-n}.
\end{equation}
If $\psi\in L^\infty(\l)=L^\infty(\mu)$ and $\phi\in L^1(\mu)$, then
\begin{align}
  \int\psi\circ f^n_\omega(x)\cdot\phi(x)\, \dif\p(x,\omega)
  &-
  \mu(\psi)\mu(\phi)
  =
  \int \dif\mu(x) \, \phi(x)\int \dif[(f^x)^n_*\theta^\N] \psi - \int
    \dif\mu\,\phi\cdot\int \dif\mu\, \psi \nonumber
  \\
  &=
  \int \dif\mu(x) \phi(x)\left(
    \int \dif\l \, \psi \cdot (q_x^n-h)
    \right) \quad\text{(using \eqref{eq:tot-var-2})}\nonumber
  \\
  &\le
  \int \dif\mu\, \phi\cdot \|\psi\|_\infty\cdot 2C\lambda^{-n}
  \le \label{eq:expdecay}
  2C\|\psi\|_{L^\infty(\mu)}\|\phi\|_{L^1(\mu)}\lambda^{-n}
\end{align}
concluding the proof of annealed decay of correlations
agains $L^1$ observables.
\end{proof}

\begin{proof}[Proof of Theorem~\ref{mthm:random-EVL-finite}
  and Corollary~\ref{cor:random-EVL=>Poisson}]

  As already explained in
  Section~\ref{sec:statement-results}, it is enough to show
  that \eqref{eq:expdecay} implies all conditions
  $D_2(u_n), D_3(u_n)$ and $D'(u_n)$.

  Condition $D_2(u_n)$ is designed to follow easily from
  fast decay of correlations. In fact, recalling
  (\ref{def:Un}), if we choose $\phi=1_{U_n}$ and
  $\psi=\int 1_{\{\vfi(x),\vfi\circ
    f_{\tilde{\omega}_1}(x),\ldots,\vfi\circ
    f_{\tilde{\omega}_{\ell-1}}(x)\leq u_n\}}
  \dif\theta^{\ell-1}(\tilde{\o})$,
  then we can take $\gamma(n,t)=\gamma(t)=C^*\lambda^{-t}$
  in Condition $D_2(u_n)$ for some $C^*>0$, $\lambda>1$ and
  $t_n=o(n)$ coming from \eqref{eq:expdecay}.  A very
  similar reasoning applies to get Condition $D_3(u_n)$ by
  choosing the same $\phi$ but
  $\psi=\int 1_Z\dif\theta^{\N}(\tilde{\o})$;
  where $A\in\RR$ and $Z=Z(\tilde{\o})=\bigcap_{i\in A\cap
      \N}{\{x:f^i_{\tilde{\o}}(x)\leq
      u_n\}}$; see the proof of \cite[Theorem D and
  Corollary F]{AFV15}.
  
  Now we show that $D'(u_n)$ holds automatically in our
  random setting.  By assumption~\ref{item:U-ball} we have
  that, for $n$ sufficiently large, $U_n$ is topologically a
  ball around a chosen point $\zeta$ and
  $n\mu(U_n)\xrightarrow[n\to\infty]{}\tau$ by
  \eqref{eq:un}. Moreover, we have from
  (\ref{eq:noise-distribution}) that
  \begin{align*}
   (f^x_*\theta)U_n=\int
    q_x\cdot1_{U_n}\,\dif\l\le\ov{q}\cdot\l(U_n), \quad
    \l-\text{a.e. } x\in\M.
  \end{align*}
  Consequently, we can estimate
  \begin{align*}
    \p\big(\{(x,\o)\in\Omega: x\in U_n &\qand f^j_{\o}(x)\in U_n\}\big)
    =
    \int_{U_n} \dif\mu(x)1_{U_n}(x)\int \dif\theta^\N(\o) 
    1_{U_n}\circ f^j_{\o}(x)
    \\
    &\le
      \mu(U_n)\int \dif\mu(x)\int \dif\theta^{j-1}(\o) 
      [(f^{f^{j-1}_{\o}x})_*\theta](U_n)
    \\
    &\le
      \mu(U_n)\cdot\ov{q}\cdot\l(U_n)
      \le
      \frac{\ov{q}}{\un{h}}\cdot\mu(U_n)^2
  \end{align*}
  since $\mu\ge\un{h}\,\l$ by
  Subsection~\ref{sec:stationary-density}. Hence we get
  \begin{align*}
    n\sum_{j=1}^{\lfloor n/k_n \rfloor}
    \p( X_0>u_n , X_j> u_n)
    &=
    n\sum_{j=1}^{\lfloor n/k_n \rfloor}
    \p\big(\{(x,\o)\in\Omega: x\in U_n \qand f^j_{\o}(x)\in U_n\}\big)
    \\
   &\leq
     n\left\lfloor\frac{n}{k_n}\right\rfloor\frac{\ov{q}}{\un{h}}\mu(U_n)^2
     \le
     \frac{\ov{q}}{\un{h}}\cdot \big(n\mu(U_n)\big)^2
     \cdot\frac1{k_n}
     \xrightarrow[n\to\infty]{}0
  \end{align*}
  since $k_n\to\infty$ by
  definition~\eqref{eq:kn-sequence-1}. This completes the
  proof of Condition $D'(u_n)$.
\end{proof}


\def\cprime{$'$}

\bibliographystyle{abbrv}

\end{document}